\def\@tocline#1#2#3#4#5#6#7{\relax
	\ifnum #1>\c@tocdepth 
	\else
	\par \addpenalty\@secpenalty\addvspace{#2}%
	\begingroup \hyphenpenalty\@M
	\@ifempty{#4}{%
		\@tempdima\csname r@tocindent\number#1\endcsname\relax
	}{%
		\@tempdima#4\relax
	}%
	\parindent\z@ \leftskip#3\relax \advance\leftskip\@tempdima\relax
	\rightskip\@pnumwidth plus4em \parfillskip-\@pnumwidth
	#5\leavevmode\hskip-\@tempdima
	\ifcase #1
	\or\or \hskip 1em \or \hskip 2em \else \hskip 3em \fi%
	#6\nobreak\relax
	\hfill\hbox to\@pnumwidth{\@tocpagenum{#7}}\par
	\nobreak
	\endgroup
	\fi}
\newtheoremstyle%
{custom}%
{}
{}
{}
{}
{}
{.}
{ }
{\thmname{}
\thmnumber{}%
\thmnote{\bfseries #3}}%
\newtheoremstyle%
{Theorem}%
{}%
{}%
{\itshape}%
{}%
{}%
{.}%
{ }%
{\thmname{\bfseries #1}%
\thmnumber{\;\bfseries #2}%
\thmnote{\;(\bfseries #3)}}%
\theoremstyle{Theorem}
\newtheorem{theorem}{Theorem}[section]
\newtheorem{proposition}[theorem]{Proposition}
\theoremstyle{definition}
\newtheorem{definition}[theorem]{Definition}
\newtheorem{example}[theorem]{Example}
\newtheorem{remark}[theorem]{Remark}
\newtheorem{question}[theorem]{Question}
\newcommand{\ZZ}{\mathbb{Z}}
\newcommand{\QQ}{\mathbb{Q}}
\newcommand{\RR}{\mathbb{R}}
\newcommand{\CC}{\mathbb{C}}
\newcommand{\HH}{\mathbb{H}}
\newcommand{\NN}{\mathbb{N}}
\newcommand{\Ocal}{\mathcal{O}}
\newcommand{\SO}{\operatorname{SO}}
\newcommand{\Isom}{\operatorname{Isom}}
\newcommand{\Stab}{\operatorname{Stab}}
\newcommand{\Hcal}{\mathcal{H}}
\newcommand{\quat}[2]{\left( \frac{#1}{#2}\right) }
\newcommand{\Clf}{\operatorname{Clf}}
\renewcommand{\Im}{\operatorname{Im}}
\renewcommand{\Vec}{\operatorname{Vec}}
\newcommand{\SL}{\operatorname{SL}}
\newcommand{\PSL}{\operatorname{PSL}}
\renewcommand{\O}{\operatorname{O}}
\newcommand{\orb}{\operatorname{orb}}
\newcommand{\mon}{\vartriangleright}
\newenvironment{psmallmatrix}
{\left(\begin{smallmatrix}}
	{\end{smallmatrix}\right)}
\newcommand{\rherm}{\operatorname{rherm}}
\begin{document}

\title{Ford Spheres in the Clifford-Bianchi Setting 
}
\author{Spencer Backman, Taylor Dupuy, Anton Hilado, Veronika Potter}

\maketitle

\begin{abstract}
We define Ford Spheres $\mathcal{P}$ in hyperbolic $n$-space associated to Clifford-Bianchi groups $\PSL_2(O)$ for $O$ orders in rational Clifford algebras associated to positive definite, integral, primitive quadratic forms. 
For $\Hcal^2$ and $\Hcal^3$ these spheres correspond to the classical Ford circles and Ford spheres introduced by Ford.

We prove the Ford spheres are an integral packing.
If we assume that $O$ is Clifford-Euclidean then $\mathcal{P}$ is also connected.
We also give connections to Dirichlet's Theorem and Farey fractions.

In a discussion section, we pose some questions related to existing packings in the literature.
\end{abstract}

\tableofcontents

\section{Introduction}
In this paper we construct Ford spheres $\mathcal{P}$ associated to orders in Clifford algebras in hyperbolic $n$-space $\Hcal^n$ for every $n$ and exhibit their basic packing properties (see \S\ref{S:ford-spheres}). 

\begin{figure}[htbp!]
	\begin{center}
		\includegraphics[scale=0.33]{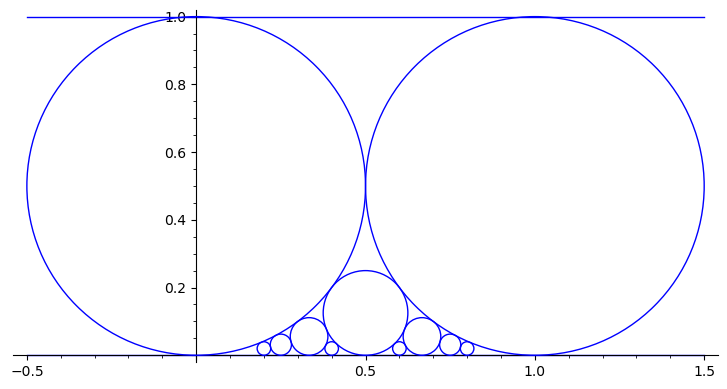}
	\end{center}
	\caption{A picture of the Ford circles in $\Hcal^2$.} \label{F:ford-circles}
\end{figure}

For us, hyperbolic $n$-space $\Hcal^n$ is the usual constant curvature Riemannian manifold and we will work exclusively in a half-space model (see \cite[\S4.6]{Ratcliffe2019}). 
By ``sphere'' we mean ``generalized sphere'' which are also sometimes called ``horospheres''. 
A \emph{horosphere} in $\Hcal^n$ is a Euclidean sphere tangent to the boundary of the half-space model or a codimension 1 Euclidean plane parallel to the boundary (again see \cite[\S4.6]{Ratcliffe2019}). See Figure~\ref{F:ford-circles} for a picture of the Ford circles in $\Hcal^2.$

Sphere packings in hyperbolic spaces are not new. 
The main contribution is showing that it is possible to directly generalize the techniques in dimensions 2 and 3 using fractional linear transformations to all dimensions.
Our hyperbolic spaces $\Hcal^n$ are built out of a ring of Clifford numbers (see \S\ref{S:clifford-algebras}) 
$$\CC_n = \RR[i_1,i_2,\ldots, i_{n-1}].$$ 
The elements $i_a$ satisfy $i_a^2=-1$ and $i_ai_b=-i_bi_a$ for $0<a<b<n$.
As a set $\Hcal^n$ a subset of a distinguished $\RR$-vector subspace 
$$V_n \cong \RR^n,$$ 
of $\CC_n$ called the \emph{Clifford vectors}.
These are elements of the form $x=x_0+x_1i_1 + \cdots + x_{n-1}i_{n-1}$ for $x_j \in \RR$.
The elements of $\Hcal^n$ are those Clifford vectors where $x_{n-1}>0$.
This uniformization of hyperbolic space affords us a representation of the group of orientation preserving isometries $\Isom(\Hcal^n)^{\circ}$ by fractional linear transfomations 
$$x\mapsto (ax+b)(cx+d)^{-1}.$$ Here $\begin{psmallmatrix} a& b\\c& d\end{psmallmatrix} \in \PSL_2(\CC_{n-1})$.
This fractional linear representation in turn allows us to consider the action of $\PSL_2(O)$ for $O \subset \CC_n$ an order in a rational Clifford algebra $K$.\footnote{The technical condition is that $K$ a  $\QQ$-Clifford algebra associated to a positive definite integral primitive quadratic form $q$. 
These rational Clifford algebras $K$ are generalizations of imaginary quadratic fields.}
The groups $\PSL_2(O)$ and $\SL_2(O)$ are called \emph{Clifford-Bianchi groups}.

In the case $n=2$, we have $\CC_1\cong\RR$ and the collection $\mathcal{P}$ are the famous Ford circles in $\Hcal^2$ from \cite{Ford1916, Ford1938} (Figure~\ref{F:ford-circles}). 
The circle tangent to the reduced fraction $p/q \in \QQ$ has radius $1/(2q^2)$.
The rational Clifford algebra $K$ will be $\QQ$, the order $O$ must be $\ZZ$, and the Clifford-Bianchi group must be $\PSL_2(\ZZ)$.

In the case $n=3$, we have $\CC_2\cong\CC$ and the collection $\mathcal{P}$ will agree with Ford Spheres in $\Hcal^3$ \cite{Ford1918,Ford1925} (Figure~\ref{F:ford-gaussian}).\footnote{In this uniformization $\Hcal^3 \subset \RR + \RR i + \RR j = V_3$ and $\CC_3=\HH=\RR[i,j]$ are Hamilton's quaternions. }
The rational Clifford algebra $K$ will be an imaginary quadratic field and the order $O$ will be an order in an imaginary quadratic field. 
The Clifford-Bianchi groups in this case coincide with usual Bianchi groups.

\begin{figure}[htbp!]
	\begin{center}
		\begin{tabular}{cc}
			\includegraphics[scale=0.5,trim = 4cm 3cm 0 0]{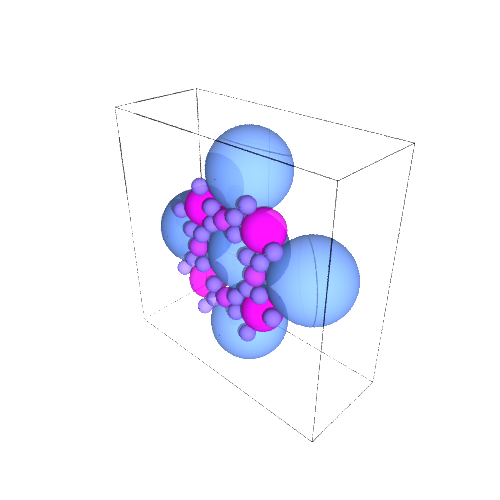} & \includegraphics[scale=0.5,trim = 0 0 0 0]{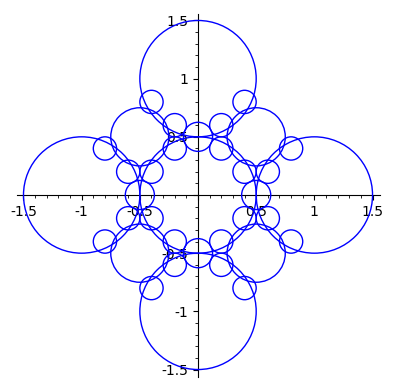} \\
		\end{tabular}
	\end{center}
	\caption{The Ford packing for $\PSL_2(\ZZ[i])$. Left: A picture of the Ford packing in $\Hcal^3$. Right: the projection of the equators of the Ford packing. \label{F:ford-gaussian}}
\end{figure}

\begin{figure}[htbp!]
	\begin{center}
		\begin{tabular}{cc}
			\includegraphics[scale=0.5,trim = 4cm 3cm 0 0]{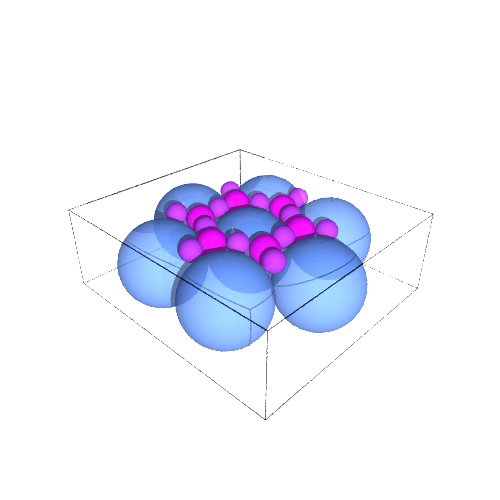} & \includegraphics[scale=0.5,trim = 0 0 0 0]{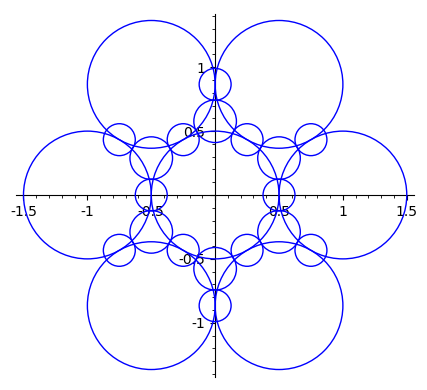}\\
		\end{tabular}
	\end{center}
	\caption{The Ford packing for $\PSL_2(\ZZ[\omega])$ where $\omega^2+\omega+1=0$. Left: A picture of the Ford packing in $\Hcal^3$. Right: the projection of the equators of the Ford packing. \label{F:ford-eisenstein}}
\end{figure}

In the case $n=4$, we have $\CC_3\cong \HH$, where $\HH=\RR[i,j]$ is Hamilton's quaternions.
The group $\PSL_2(\HH)$ acts on $\Hcal^4$.
If $\Ocal_3$ is the Hurwitz order $\ZZ[i,j,\frac{1+i+j+ij}{2}]$ then we expect $\mathcal{P}$ to coincide with a named special packing in $\Hcal^4$ in the literature although we were unable to find a reference for this packing.

In the cases $n>4$, the structure of $\CC_{n-1}$ is more complicated and governed by Bott periodicity for Clifford algebras (\cite[\S2.11]{Dupuy2024}).\footnote{We warn the reader that the Clifford numbers $\CC_4$ are \emph{not} the octonians and contain zero divisors.}

The orders in these Clifford algebras simultaneously generalize orders in imaginary quadratic fields and orders quaternion algebras $\quat{-a,-b}{\QQ}$ for $a,b$ positive coprime integers. 
These orders appear to be of fundamental interest and we know very little about the basic algebraic number theory governing their behavior.\footnote{
	See \cite{Sheydvasser2018,Sheydvasser2019} for a discussion in the quaternionic case.
}
For example we know in $\CC_8$ there is an order $O_{E_8}$ whose lattice of Clifford vectors is an $E_8$-lattice. 
Also, in $\CC_4$ there is an order $\Ocal_4$ whose lattice of Clifford vectors is a $D_4$-lattice and whose conjugation action witnesses the famous triality automorphism of its Dynkin diagram  (see \cite[\S13 -\S16]{Dupuy2024} and Example~\ref{E:orders} for more orders; in the quaternionic case see \cite{Sheydvasser2018}).
We do not know if there exists infinitely many Clifford-Euclidean such orders (see \S\ref{S:clifford-euclidean} for a definition).

For brevity we isolate commentary on history, context, and connections to other constructions in the literature to a discussion section (\S\ref{S:discussion}). 

We now state our results.
Following \cite[\S 3]{Maxwell1982}, by a \emph{ball packing}, we mean an infinite collection of balls in a metric space whose interiors are disjoint. 
By a \emph{sphere packing} we mean the spheres of a ball packing. 
A packing is called \emph{integral} if the reciprocal radii (curvatures) of all of the spheres are integers in the Clifford uniformization.\footnote{Some authors call the curvatures the \emph{bends} of the spheres.}
\begin{theorem}\label{T:main}
For every order $O \subset \CC_n$ associated to a positive definite integral primitive quadratic form $q$, the associated Ford spheres $\mathcal{P}$ are an integral packing in $\Hcal^{n+1}$.
\end{theorem}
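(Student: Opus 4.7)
The plan is to follow the classical argument for Ford circles in $\Hcal^2$ and Ford spheres in $\Hcal^3$, with the arithmetic of $\ZZ$ (or an imaginary quadratic order) replaced by the Ahlfors--Vahlen arithmetic of $\SL_2(O)$. First I would index the Ford spheres by left cosets $\gamma \cdot \Stab(\infty)$ in $\PSL_2(O)$, taking each sphere to be the image of a fixed height-$1$ horosphere under the fractional linear transformation $x\mapsto(ax+b)(cx+d)^{-1}$, where $\gamma=\begin{psmallmatrix}a&b\\c&d\end{psmallmatrix}$. For $c=0$ this image is again a horizontal hyperplane; for $c\neq 0$ a direct computation (using $c^{-1}=\bar c/\nrd(c)$ for an invertible Clifford vector $c$) shows the image is a Euclidean sphere tangent to the boundary at $\alpha:=ac^{-1}$ with radius $r:=1/(2\nrd(c))$. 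Because $\alpha$ and $r$ depend only on the coset, the indexing is well-defined.

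Integrality is then immediate: since $O$ is an order associated to a positive definite integral primitive quadratic form $q$, every $c\in O$ satisfies $\nrd(c)=q(c)\in\ZZ_{\geq 0}$, so the curvature $1/r=2\nrd(c)$ is a positive integer. For the packing property, the Euclidean geometry of two spheres of radii $r,r'$ sitting tangent to a common hyperplane at points $\alpha,\alpha'$ shows that their open interiors are disjoint exactly when $|\alpha-\alpha'|^2\geq 4rr' = 1/(\nrd(c)\nrd(c'))$. I would establish this bound by means of an Ahlfors--Vahlen ``determinant'' identity: clearing denominators in $ac^{-1}-a'{c'}^{-1}$ and invoking the Vahlen conditions rewrites the difference as $c^{-1}\,\mu\,{c'}^{-1}$ (with suitable Clifford conjugations) for some $\mu\in O$ coming from the determinant-like expression for the product $\gamma^{-1}\gamma'$. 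The element $\mu$ has reduced norm a nonnegative integer, and is nonzero precisely when the two cosets differ; hence $\nrd(\mu)\geq 1$, and multiplicativity of $\nrd$ on Clifford vectors yields the required inequality.

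The main obstacle is executing the Vahlen-determinant step rigorously. The ring $\CC_n$ is non-commutative and contains zero divisors for $n\geq 4$, so $\SL_2(O)$ is cut out not by a literal determinant equation but by the four Ahlfors--Vahlen conditions; one must track carefully which products and conjugates remain in $O$ and confirm that the numerator $\mu$ has positive integer reduced norm. Once the norm identity for $\alpha-\alpha'$ is in hand, both integrality and the packing property follow directly, and the degenerate cases where $c$ or $c'$ vanishes correspond to tangencies between horizontal hyperplanes and ordinary spheres and are recovered from the same formula in the appropriate limit.
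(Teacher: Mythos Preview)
Your outline for integrality is close to the paper's: both arrive at $r=1/(2|c|^2)$ and hence integer curvature $2|c|^2$. The paper packages the computation via \emph{reverse Clifford Hermitian} matrices: it represents $S_\infty$ by a matrix $A_\infty$, shows $g^{-1}(S_\infty)=S_{g^\dag A_\infty g}$, and proves the discriminant $\delta(A)=|\beta|^2-\alpha\gamma$ is invariant under $A\mapsto g^\dag A g$, which immediately gives the radius. One correction to your sketch: the entry $c$ of $\gamma\in\SL_2(O)$ is in the Clifford monoid $O^\mon$, not generally a Clifford vector, so writing $\nrd(c)=q(c)$ is not literally correct; what one uses is that $c\bar c=|c|^2$ is a positive real number lying in $O\cap\RR=\ZZ$.

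For internal disjointness the paper takes a genuinely different route. Rather than your proposed norm inequality $|\alpha-\alpha'|^2\geq 1/(|c|^2|c'|^2)$, the paper observes that $S_\infty$ is inscribed in the closure of the bubble domain $B$ (the region above all hemispheres $|x-\mu^{-1}\lambda|=1/|\mu|$ for unimodular $(\lambda,\mu)$), and that $\Gamma$-translates of $\overline B$ tile $\Hcal^{n+1}$. Two distinct Ford spheres then sit in distinct tiles, which meet only along their boundaries. This leans on the Bianchi--Humbert description of fundamental domains already established in the companion paper, and entirely bypasses any Vahlen determinant manipulation.

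Your approach is the natural generalization of the classical $\Hcal^2$ argument and would be more self-contained if it works, but the obstacle you flag is real and not merely bookkeeping. The numerator $\mu$ you extract from $\gamma^{-1}\gamma'$ lives in $O^\mon$, not in $\Vec(O)$, so ``multiplicativity of $\nrd$ on Clifford vectors'' does not apply directly; you need that $|\mu|^2=\mu\bar\mu$ is a nonzero integer whenever $\mu\neq 0$ in $O^\mon$, and in the presence of zero-divisors (for $n\geq 4$) this requires an argument. The paper's fundamental-domain proof trades this algebraic delicacy for geometric input developed elsewhere; your route would give a sharper tangency criterion (as in the $\Hcal^2$ case) if the Vahlen identity can be pushed through.
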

Just as in the Bianchi setting, the general geometry of the Ford spheres seems to be controlled by the arithmetic of the Clifford-Bianchi group $\SL_2(O)$.

We say that a packing is \emph{connected} if its associated tangency graph is connected; in this graph the vertices are the spheres and there is an edge between two spheres if they are tangent. 
\begin{theorem}
If $O$ is Clifford-Euclidean then its associated collection of Ford spheres $\mathcal{P}$ is connected.
\end{theorem}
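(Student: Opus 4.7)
The plan is to mimic the classical argument for the connectedness of the Ford packing of $\PSL_2(\ZZ)$. There, every reduced fraction $p/q$ can be reduced to $\infty = 1/0$ by a finite sequence of translations and the inversion $S = \begin{psmallmatrix} 0 & -1 \\ 1 & 0 \end{psmallmatrix}$ (the Euclidean algorithm), and each reduction step corresponds to a tangency between consecutive Ford circles. I would lift this reasoning to arbitrary Clifford-Euclidean orders $O \subset \CC_n$.

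Two elementary ingredients come first. The Ford sphere $\mathcal{P}_\infty$ at infinity is a horizontal hyperplane at some fixed height $h$ in the half-space model, and for each $a \in O$ the Ford sphere $\mathcal{P}_a$ is a Euclidean ball of radius $h/2$ tangent to that hyperplane from below; thus $\mathcal{P}_\infty$ is tangent to every $\mathcal{P}_a$ with $a \in O$. Second, the group $\PSL_2(O)$ acts by hyperbolic isometries and permutes the Ford packing (this is part of Theorem~\ref{T:main}), so tangency is preserved: if $\mathcal{P}, \mathcal{P}'$ are tangent Ford spheres and $\gamma \in \PSL_2(O)$, then $\gamma \mathcal{P}$ and $\gamma \mathcal{P}'$ are tangent.

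The main step, and the only place the Clifford-Euclidean hypothesis enters, is to show that $\PSL_2(O)$ is generated by the translations $T_b(x) = x + b$ for $b \in O$ together with $S$. This is the standard row-reduction argument: given $\gamma \in \SL_2(O)$, apply the Clifford-Euclidean division algorithm to the bottom row, reducing the norm of one entry by left-multiplying by $T_b$ (which adds a multiple of one row to another) and $S$ (which swaps the rows up to sign). The Euclidean norm on $O$ provides the termination measure. Carrying this out cleanly in the possibly noncommutative Clifford setting (where $\CC_n$ can have zero divisors for $n \ge 4$) is the main obstacle; in the imaginary quadratic (Bianchi) case it is the classical fact, and in the quaternionic case it is treated in \cite{Sheydvasser2018}.

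Granted generation, connectedness follows by an induction on word length. For any Ford sphere $\mathcal{P}_c$, write $c = \gamma \cdot \infty$ with $\gamma = s_k \cdots s_1$ a product of the generators above. By the induction hypothesis there is a chain of tangent Ford spheres from $\mathcal{P}_\infty$ to $s_{k-1}\cdots s_1 \cdot \mathcal{P}_\infty$; applying the isometry $s_k$ yields a tangent chain from $s_k \cdot \mathcal{P}_\infty$ to $\mathcal{P}_c$. This is glued to a single-edge chain $\mathcal{P}_\infty \leftrightarrow s_k \cdot \mathcal{P}_\infty$, which is trivial if $s_k$ is a translation (then $s_k \cdot \mathcal{P}_\infty = \mathcal{P}_\infty$) and is the base tangency $\mathcal{P}_\infty \leftrightarrow \mathcal{P}_0$ if $s_k = S$. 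This produces the desired tangent chain from $\mathcal{P}_\infty$ to $\mathcal{P}_c$ and completes the proof.
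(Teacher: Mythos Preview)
Your approach is correct in outline and closely related to the paper's, but organized differently. The paper inducts directly on the curvature $2|c|^2$ of a Ford sphere $S_\alpha = g(S_\infty)$ with $g = \begin{psmallmatrix} a & b \\ c & d \end{psmallmatrix}$: one application of the Clifford-Euclidean division to the pair $(c,d)$ produces $\omega \in \Vec(O)$ with $|c\omega + d| < |c|$, and then $S_{g(\omega)}$ is tangent to $S_\alpha$ (being the image under $g$ of the tangent pair $S_\omega, S_\infty$) and has strictly larger radius. Your route instead packages the entire Euclidean algorithm into the generation theorem $\PSL_2(O) = \langle S, T_b \rangle$ and then inducts on word length. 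Both arguments unwind to the same computation; the paper's version is more self-contained since it never states or invokes generation, while yours makes the connection to the group-theoretic structure explicit at the cost of importing that result from \cite{Dupuy2024}.

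One small correction: your translations must be $T_b$ for $b \in \Vec(O)$, not $b \in O$. For $\begin{psmallmatrix} 1 & b \\ 0 & 1 \end{psmallmatrix}$ to lie in $\SL_2(\CC_n)$ the entry $b$ must be a Clifford vector, and indeed the generation theorem you need (and the one proved in \cite[\S3.5]{Dupuy2024}) uses only these. This also matches your base tangency: the spheres of radius $1/2$ tangent to $S_\infty$ are exactly the $S_\omega$ for $\omega \in \Vec(O)$.
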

In the connected case we give a proof of an analog of Dirichlet's Theorem on approximation of irrationals by rationals.
We also provide some commentary of Farey fractions and their relationship to adjacent Ford spheres.

\subsection{Roadmap}

Internal disjointness is proved in \S\ref{S:internal-disjointness}, integrality is proved in \S\ref{S:integrality}, and connectedness under the Euclidean hypothesis is proved in \S\ref{S:connectivity}.
While abstract theories for sphere packings and symmetric spaces exist (see \cite[\S 5.5]{Dupuy2024}) we emphasize that our proofs employ M\"obius transformation.
The proofs depend on understanding the action $\Gamma=\SL_2(O)$ on the partial Satake compatification $\Hcal^{n+1,*}=\Hcal^{n+1} \cup (\Vec(K) \cup \lbrace \infty \rbrace)$, where $\Vec(K) = V_n \cap K$.

To prove internal disjointness we need to inscribe our spheres inside a union of fundamental domains under an orbit of a stabilizer at a point. 
This requires a detailed description of fundamental domains for these groups which we provide.

\subsection*{Acknowledgements} 
Veronika Potter was supported by the University of Vermont's College of Engineering and Mathematical Sciences Undergraduate Research Initiative.
Spencer Backman was supported in part by a Simons Collaboration Gift (\# 854037) and the National Science Foundation (DMS-2246967). Taylor Dupuy is supported in part by the National Science Foundation (DMS-2401570).  

The authors would like to thank Asher Auel, Eran Assaf, David Dummit, Arthur Baragar, Colin Ingalls, Alex Kontorovich, Adam Logan, Senia Sheydvasser, Daniel Martin, and John Voight for useful conversations.

\section{Background}

\subsection{Clifford Algebras}\label{S:clifford-algebras}
Let $(W,q)$ be a quadratic space over a commutative ring $R$. (This is just a projective $R$-module with a quadratic form on it. For this paper one can just think of $R^m$ for some $m \in \NN$ and a map $q: R^m \to R^m$ induced by a bilinear form $B_q$ via $B_q(x,x) = q(x)$.
From the quadratic form we recover the bilinear form via $B_q(x,y) =( q(x+y)-q(x)-q(y))/2$ when $1/2 \in R$.
)
\begin{definition}
The \emph{Clifford algebra} of $(W,q)$ is $\Clf(W,q) = T(W)/I_q$ where $T(W)$ is the tensor algebra and $I_q$ is generated by the relations $w\otimes w = -q(w)$ for all $w\in W$.
\end{definition}
We will sometimes denote this as $\Clf(q)$.
The \emph{Clifford vectors} are the $R$-submodule generated by $R$ and $W$ inside $\Clf(W,q)$. 
We denote them by $\Vec(q)$ or $\Vec(\Clf(W,q))$.  

Any Clifford algebra comes with three involutions: 
\begin{itemize}
\item the parity involution $x\mapsto x'$, 
\item the transpose or reversal involution $x\mapsto x*$ 
\item clifford conjugation $x\mapsto \overline{x} = (x')^* = (x^*)'$. 
\end{itemize}
The partity involution is the unique involution on $\Clf(W)$ induced by the map $w \mapsto -w$ on $W$. 
The reversal involution takes products of basis vectors and reverses them and then extends linearly. 
Note that for all elements $\alpha,\beta$ of the Clifford algebra we have $(\alpha \beta)'= \alpha'\beta'$ and $(\alpha\beta)^* = \beta^* \alpha^*$ and $\overline{\alpha\beta} = \overline{\beta} \overline{\alpha}$.

The analog of the real number, complex numbers, and Hamilton's quaternions are the Clifford numbers which we now define. All of our rational clifford algebras and orders can be thought of as embedded into this fixed ring.
\begin{definition}
Let $f_n = y_1^2 + \cdots + y_n^2$. 
The \emph{Clifford numbers} $\CC_n$ are the Clifford algebra 
$$\CC_n=\Clf(\RR^{n-1},f_{n-1}) = \RR[i_1,i_2,\ldots,i_{n-1}], \qquad i_a^2=-1, \quad 1\leq a <n,$$ $$i_ai_b = -i_bi_a, \quad 0<a<b<n.$$
\end{definition}
\noindent The Clifford vectors $V_n = \Vec(\CC_n)$ are a special $n$-dimensional $\RR$-vector subspace of $\CC_n$.
\begin{definition}
	The \emph{Clifford vectors} $V_n \subset \CC_n$ are the collection of $x=x_0+x_1i_1 + \cdots + x_{n-1}i_{n-1}$ where $x_j \in \RR$.
\end{definition}
Given an element $x \in V_n$ we let $x_j$ be the coefficient of the element $i_j$ in its expansion in terms of its standard basis understanding that $i_0=1$. 
For $x\in V_n$ we have $\vert x \vert^2 = x_0^2+x_1^2 + \cdots +x_{n-1}^2 = x \overline{x}$.

We let the units of $\CC_n$ be $U(\CC_n)$. 
The \emph{Clifford group} $\CC_n^{\times}$ is the subgroup of the group of units of $\CC_n$, generated by Clifford vectors. 
These play an important role in the geometry of spin groups.
For the present paper, the reader just needs to know that these are the ``good analog'' of unit groups.

We now come to our analogs of quadratic number fields. Let $q = d_1 y_1^2 + \cdots + d_{n-1}y_{n-1}^2$ be a $\ZZ$-form of $f_{n-1}$ with $d_j$ square-free positive integers for $1\leq j \leq n-1$.
We will suppose that our form is primitive, meaning that the ideal generated by $q(w)$ for $q \in \ZZ^{n-1}$ will be the unit ideal.
Let $K = \Clf(q)$. 
We view $K$ as a subalgebra of $\CC_n$ via $K = \QQ[\sqrt{d_1} i_1, \ldots, \sqrt{d_{n-1}} i_{n-1}]$. 
We will use the Hilbert symbol notation for these orders 
$$\quat{-d_1,-d_2,\ldots,-d_{n-1}}{\QQ} = \QQ[\sqrt{d_1} i_1, \ldots, \sqrt{d_{n-1}} i_{n-1}].$$
We give some basic examples.
\begin{example}[{\cite[Example 2.2.2]{Dupuy2024}}]
\begin{enumerate}
	\item When $q$ is $1$-ary, i.e. $q(y) = dy^2$, then $K\cong \QQ(\sqrt{-d})$.  
	\item When $q$ is $2$-ary we get the quaternion algebras $K=(-d_1,-d_2/\QQ)$.
\end{enumerate}
\end{example}

We will consider $O$ an order in $K$.
We recall that these are subrings which are $\ZZ$-lattices in $K$ viewed as a $\QQ$-vector space.
In this paper we will always assume our orders to be closed under the involution $*$. 
This simplifying assumption allows us to get rid of the words ``left'' and ``right'' a lot of the time.

The \emph{Clifford vectors} of an order are defined to be 
$$\Vec(O) = O \cap V_n.$$
These are important lattices in $\RR^n$ which control a lot of the arithmetic of $O$.

The \emph{Clifford monoid} is defined to be $O^{\mon} = O \cap (\CC_n^{\times} \cup \lbrace 0 \rbrace)$.
The \emph{Clifford group} of the order $O^{\times}$ are the elements are the nonzero elements of $\O^{\mon}$ whose inverse is also in the order.
In the case of positive definite $O$, this is a finite group \cite[Proposition 3.2.5]{Dupuy2024}.

\subsection{Clifford-Euclidean Domains}\label{S:clifford-euclidean}
The algebraic number theory of orders in Clifford algebras appears to be fairly open. 
One notion that has been isolated and useful is the analog of Euclidean domains.
\begin{definition}\label{def:clifford-euclidean}
	Let $O$ be an order	in a Clifford algebra.
	We say that $O$ is {\em right Clifford-Euclidean} if there is a norm function $N: O^\mon \to \NN$ such that
	\begin{enumerate}
		\item $N(x) = 0$ if and only if $x$ is a zero-divisor.
		\item \label{I:division-algorithm} For all $x,y \in O^\mon$ with $N(x) > 0$ and $xy^* \in \Vec(R)$, there exists some $q\in \Vec(R)$ and some $r\in O^\mon$ such that 
		\begin{equation}\label{eqn:left-division}
		y=xq+r
		\end{equation}
		where $N(r)<N(x)$.
	\end{enumerate}
\end{definition}

There is an obvious notion of left Clifford-Euclidean domain as well. 
We will work with orders which are \emph{compatible} meaning they are closed under the $*$-conjugation.
We use the following criterion for producing Clifford-Euclidean examples: if $\Vec(O)$ is a lattice in $V_n$ with covering radius less than $1$ then the order is Clifford-Euclidean \cite[\S3.4.1]{Dupuy2024}. 
Such examples are called \emph{norm Clifford-Euclidean} (since the Euclidean norm is the reduced norm).
In \cite{Dupuy2024} the authors performed a search in Magma for orders in low dimension which are contained in $(-d_1,\ldots,-d_{n-1}/\QQ)$ and contain $(-d_1,\ldots,-d_{n-1}/\QQ)$, and found a number of new classes of orders.

\begin{example}\label{E:orders}
In what follows multiple subscripts on an $i$ mean a product of the corresponding elements.
For example $i_{12} = i_1i_2$ and $i_{123} = i_1i_2i_3$.
	\begin{enumerate}	
		\item The Hurwitz order inside the rational quaternions 
		$$O_3 = \ZZ[i,j,\frac{1+i+j+ij}{2}].$$
		The group $\SL_2(O_3)$ acting discretely and with finite covolume on hyperbolic $4$-space.
		\item In one higher dimension there is a unique maximal order in $(-1,-1,-1/\QQ)$ which we denote by 
		$$O_4 = \ZZ[i_1,i_2,i_3, \frac{1+i_1+i_2+i_3}{2}].$$
		This is a new higher dimensional order which is an analog of the Gaussian integers which was found by experimentation \cite[\S11.1]{Dupuy2024}. The group $\SL_2(O_4)$ acts discretely and with finite covolume on hyperbolic $5$-space.
		This order has a remarkable connection to triality of the $D_4$-lattice.
		\item In $(-1,-1,-1,-1/\QQ)$ there are two classes of orders, one containing five orders and another containing a unique order. 
		The [5,1,4]-orders associated to [5,1,4] binary codes are Clifford-Euclidean (5 in its class) while the oddball order $\Ocal_{5,!}$ is not Clifford-Euclidean. 
		The [5,1,4]-orders are described in \cite[\S15.2]{Dupuy2024}. For example the order $\Ocal_{5,2}$ corresponding to $11011$ is takes the form
		 $$ \Ocal_{5,2} = \ZZ[i_1,i_2,i_3,i_4]\left [ \frac{i_0+i_1+i_3+i_4}{2}\right ].$$
		\item In $(-1,-3/\QQ)$ there are the orders $\Ocal(-1,-3)_i$ for $i=1,2$ which are described in \cite[\S12]{Dupuy2024}. These are Clifford-Euclidean. 
		\item In $(-1,-1,-3/\QQ)$ there are two classes of orders $A(-1,-1,-3)$ and $B(-1,-1,-3)_j$ for $j=1,2,3$. 
		The orders $B(-1,-1,-3)_j$ are Clifford-Euclidean while $A(-1,-1,-3)$ is not. 
		The geometry of the orders $B(-1,-1,-3)_j$ is described in \cite[\S14.1]{Dupuy2024}. They take the form
		 $$ B(-1,-1,-3)_j = \ZZ[i_1,i_2,\sqrt{3}i_3]\left [\frac{i_j+\sqrt{3} i_{j3}}{2} \right].$$
		 
	\end{enumerate}
\end{example}
The manuscript \cite{Dupuy2024} also reports on an order $\Ocal_{E_8}$ whose underlying lattice of Clifford vectors was an $E_8$ lattice which is very close to Clifford-Euclidean in that its covering radius is exactly $1$. 
The order from \cite{Dupuy2024}, with exception to the quaternion orders, are new.

\begin{remark}
Presently it is an open problem to determine if there are infinitely many Clifford-Euclidean orders (see \S16.1 of \cite{Dupuy2024}).
\end{remark}

\begin{example}
	Sheydvasser has investigated the higher dimensional class number one problem in the quaternionic case
	\cite{Sheydvasser2023,Sheydvasser2021,Sheydvasser2019,Sheydvasser2018}.
	Quaternion orders which are norm Clifford-Euclidean are investigated in \cite{Sheydvasser2019} under the name of ``covered by unit balls'' (see loc. cit. just below Theorem 11.1), and Clifford-Euclidean orders are called $\sigma$-Euclidean in \cite{Sheydvasser2023,Sheydvasser2021}.
	Sheydvasser. 
	In \cite[Theorem 13.2]{Sheydvasser2019}, he gives a finite list of rational quaternion algebras which contain Clifford-Euclidean orders and classifies these orders.
	Also \cite[Corollary 12.1]{Sheydvasser2019} gives many examples which are provably not Clifford-Euclidean (not just not norm Clifford-Euclidean). 
	
	We warn the reader that the involution $\sigma$ in Sheysvasser's examples is not always equal to $\sigma = *$, the Clifford transpose/reversal involution, which is always the case in our examples.
\end{example}

\subsection{Hyperbolic Geometry}
The Clifford Uniformization of hyperbolic space is Riemannian manifold $\Hcal^{n+1}$ defined by 
 $$ \Hcal^{n+1} = \lbrace x \in V_{n+1} \colon x_n > 0 \rbrace, \qquad ds^2 = \frac{dx_0^2 + dx_1^2 + \cdots + dx_n^2}{x_n^2}. $$
The orientation preserving isometries all take the form $x \mapsto (ax+b)(cx+d)^{-1}$ for $\begin{psmallmatrix} a & b \\ c & d \end{psmallmatrix} \in \SL_2(\CC_n).$
Here the group $\SL_2(\CC_n)$ is defined by 
 $$ \SL_2(\CC_n) = \lbrace \begin{psmallmatrix} a & b \\ c& d \end{psmallmatrix} \in M_2(\CC_n^{\times} \cup \lbrace 0 \rbrace) \colon ad^*-bc^* =1, ac^{-1}, bd^{-1} \in V_n \rbrace.$$
For $O$ an order in a positive definite Clifford algebra we define $\SL_2(O)$ to be the elements $g$ with of $g,g^{-1} \in M_2(O) \cap \SL_2(\CC_n)$.
These can be identified as arithmetic subgroups of $\SO_{1,n+1}(\RR)^{\circ} \cong \Isom(\Hcal^{n+1})^{\circ}$.
The connection with the isometry groups is given in \cite[\S5.4]{Dupuy2024} and arithmeticity is proved in \cite[Theorem 6.1.4]{Dupuy2024}.

There exists an extension of Bianchi-Humbert Theory which describes the fundamental domain $D$ of $\SL_2(O)$ acting on $\Hcal^{n+1}$. 
To do this we need to talk about the partial Satake compactification.
\begin{definition}
	The \emph{partial Satake compactification} of hyperbolic $(n+1)$-space with respect to a rational Clifford algebra $K$ associated to a positive definite primitive quadratic form is 
 $$ \Hcal^{n+1,*} = \Hcal^{n+1} \cup \Vec(K) \cup \lbrace \infty \rbrace.$$
\end{definition}
Note that $\SL_2(O)$ acts on $\Vec(K) \cup \lbrace \infty \rbrace$. 
The full compactification we take to be $\overline{\Hcal}^{n+1} = \Hcal^{n+1} \cup V_n \cup \lbrace \infty \rbrace$ where $V_n \cup \lbrace \infty \rbrace$ is the one-point-compactification which topologically just the $n$-sphere $S^n$.
A detailed treatment of this can be found in \cite[\S5.6]{Dupuy2024}.

In order to talk about ``rational points'' of the boundary we need to talk about what it means to be a fraction.
Suppose that $O$ is stable under $*$.
``Unimodularity'' is the Clifford analog of saying that $\mu^{-1}\lambda$ is a fraction in lowest terms.
\begin{definition}
We say that $(\mu,\nu) \in O^2$ is \emph{(right) unimodular} if and only if $\exists \begin{psmallmatrix} * & * \\ \mu & \nu \end{psmallmatrix}\in \SL_2(O)$. 
\end{definition}
This implies that $\mu,\nu \in O^{\mon}$ and that $\mu^{-1}\nu \in \Vec(K) \cup \lbrace \infty \rbrace$ \cite[\S 5]{Dupuy2024}.
We call a pair $(\mu,\nu) \in O$ \emph{left unimodular} provided there is some matrix $\begin{psmallmatrix} \mu & * \\ \nu & * \end{psmallmatrix}$.
Left unimodularity means that there exists some $g$ such that $g(\infty)=\mu\nu^{-1}$ makes sense.

\begin{remark}
	The above definition should be compared to \cite[Definition 13.1]{Sheydvasser2019} where a similar definition is used in the quaternionic case.
\end{remark}

We now describe the fundamental domain for $\PSL_2(O)$ acting on $\Hcal^{n+1}$.
 A discussion of the fundamental domain and algorithms is given in \cite[\S 7]{Dupuy2024}.
\begin{definition}\label{D:bubble}
	The \emph{bubble domain} is the set 
	$$B=\lbrace x \in \Hcal^{n+1} \colon \forall (\lambda,\mu),  \vert x -\mu^{-1}\lambda\vert \geq 1/\vert\mu\vert \rbrace$$
	where $(\lambda,\mu)$ run over unimodular pairs.  
\end{definition}

Fundamental domains for $\PSL_2(O)$ are then the collection of point in the bubble domain which lie above a fundamental domain for the stabilizer of $\infty$ at the boundary.
\begin{theorem}[{\cite[\S 7]{Dupuy2024}}]
	The fundamental domain for $\SL_2(O)$ is the set of $y+ix_n \in \Hcal^{n+1}$ where $y \in F \subset V_n$ a fundamental domain for $\SL_2(O)_{\infty} = \Stab_{\SL_2(O)}(\infty)$ the stablizer of $\infty$ and $y+ix_n\in B$. 
\end{theorem}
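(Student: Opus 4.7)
The plan is to verify the two defining axioms of a fundamental domain for $\Gamma=\SL_2(O)$ acting on $\Hcal^{n+1}$: that the $\Gamma$-translates of the proposed set $D$ cover $\Hcal^{n+1}$, and that distinct interior points of $D$ are never $\Gamma$-equivalent. Write $\Gamma_\infty = \Stab_\Gamma(\infty)$. The key tool is the height transformation law: for $g=\begin{psmallmatrix} a & b \\ c & d \end{psmallmatrix} \in \Gamma$ with $c\neq 0$, one has $x_n(g\cdot z) = x_n(z)/|cz+d|^2$, so applying $g$ strictly raises the height iff $z$ lies strictly inside the \emph{isometric sphere} $S_g = \lbrace x \in V_{n+1} : |cx+d|=1 \rbrace$. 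Because bottom rows of elements of $\Gamma \setminus \Gamma_\infty$ range exactly over right unimodular pairs (with sphere radius $1/|c|$ and center $-c^{-1}d = \mu^{-1}\lambda$ in the notation of Definition~\ref{D:bubble}), the bubble condition is equivalent to saying $z$ lies outside every isometric sphere.

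For the covering property, given $z\in\Hcal^{n+1}$, I would first show that $\sup_{g\in \Gamma} x_n(gz)$ is attained. The ingredient needed is local finiteness of isometric spheres: right unimodularity of $(c,d)$ forces $c\in O^\mon$, so $|c|^2$ lies in the discrete set of positive reduced norms of $O$, bounding the radii $1/|c|$; combined with the fact that $\Vec(O)$ is a lattice in $V_n$ controlling the possible sphere centers modulo $\Gamma_\infty$-translation, this ensures only finitely many isometric spheres of radius above any fixed threshold can meet a compact neighborhood of $z$. Any maximizer $g_0$ then satisfies $g_0 z \in B$, for if not some $h\in\Gamma\setminus\Gamma_\infty$ would place $g_0 z$ strictly inside $S_h$, making $x_n(hg_0 z) > x_n(g_0 z)$. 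Finally, since $\Gamma_\infty$ acts on each horosphere $\lbrace x_n=\text{const}\rbrace$ by Euclidean isometries of $V_n$, one applies an element of $\Gamma_\infty$ to move the $V_n$-projection of $g_0 z$ into $F$ while preserving both its height and membership in $B$; the result lies in $\Gamma z \cap D$.

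For interior injectivity, suppose $z,z'$ are interior points of $D$ and $z'=gz$ for some $g\in\Gamma$. If $g\in\Gamma_\infty$, then the $V_n$-projections of $z$ and $z'$ are interior points of $F$ related by $g$, forcing them to coincide by the fundamental-domain property of $F$, and hence $z=z'$. Otherwise, the bubble condition at $z$ applied to $g$ gives $x_n(z')\leq x_n(z)$, while the bubble condition at $z'$ applied to $g^{-1}$ (whose bottom row is $(-c^*, a^*)$) gives $x_n(z)\leq x_n(z')$. Equality throughout forces $|cz+d|=1$, placing $z$ on the boundary of $B$ and contradicting the interiority of $z$ in $D$.

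The main obstacle is the local-finiteness step underlying the height-supremum argument. It requires combining the discreteness of reduced norms on $O^\mon$ with the lattice structure of $\Vec(O)$ in $V_n$ to ensure that only finitely many isometric spheres have interiors meeting any compact subset of $\Hcal^{n+1}$. Once this is in hand, the remainder is the classical Ford/isometric-sphere construction adapted to M\"obius actions valued in $\CC_n$.
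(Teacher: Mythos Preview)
The paper does not actually prove this theorem: it is quoted as a background result with the citation \cite[\S 7]{Dupuy2024}, and no argument is given here. So there is no ``paper's own proof'' to compare against.

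That said, your proposal is the correct strategy and is precisely the Bianchi--Humbert/Ford-domain argument that the cited reference carries out in the Clifford setting. Your identification of the bubble domain with the exterior of all isometric spheres via the height law $x_n(gz)=x_n(z)/|cz+d|^2$, the reduction of covering to a height-maximization, and the two-sided height inequality for interior injectivity are all standard and sound. The one genuinely nontrivial ingredient, as you correctly flag, is the local finiteness of isometric spheres; in the Clifford case this needs a bit of care because $O^\mon$ is not the full order, but the discreteness of $\{|c|^2 : c\in O^\mon\}$ together with the lattice property of $\Vec(O)$ is exactly what one uses, and this is handled in \cite[\S 7]{Dupuy2024}. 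One small point to tighten: in the injectivity step, when $g\in\Gamma_\infty$ you should note that $\Gamma_\infty$ contains rotations $\pi_t$ as well as translations, so the action on the $V_n$-projection is via the full Euclidean action of $\Gamma_\infty$ on $V_n$, not just the $\Vec(O)$-translations; your phrasing already accommodates this, but it is worth making explicit that $F$ is a fundamental domain for that full action.
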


We will use the geometry of the fundamental domains, and in particular the bubble domain, to discuss our sphere packings. The basic idea is that $S_{\infty}$ is inscribed in $\overline{B}$ so that orbits of $\overline{B}$ then correspond to the packing $\mathcal{P} = \orb_{\PSL_2(O)}(S_{\infty})$.

\subsection{Examples in $\Hcal^3$}

\begin{figure}[htbp!]
	\begin{center}
		\begin{tabular}{cc}
			\includegraphics[scale=0.5,trim = 4cm 3cm 0 0]{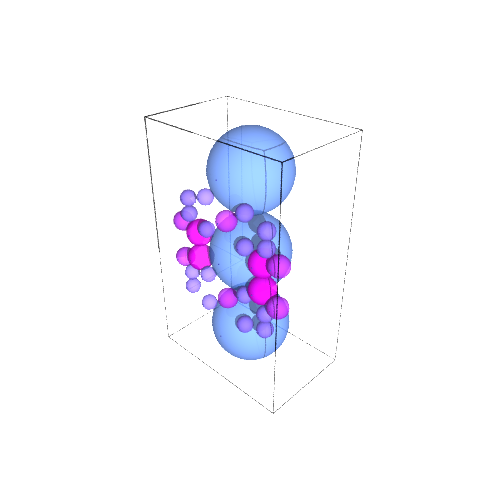} & \includegraphics[scale=0.5,trim = 0 0 0 0]{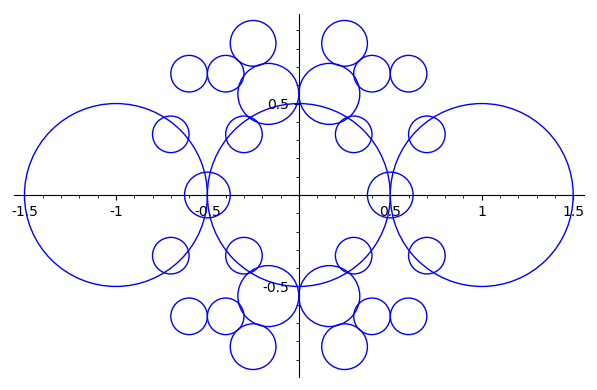} \\
		\end{tabular}
	\end{center}
	\caption{The Ford packing for $\PSL_2(\ZZ[i\sqrt{11}])$. Left: A picture of the Ford packing in $\Hcal^3$. Right: the projection of the equators of the Ford packing. \label{F:ford-sqrtm11}}
\end{figure} 

In terms of sphere packings, one can then define a sphere at infinity by the formula $S_{\infty} = V_2+i_3$ and then consider the orbit of this space under the Bianchi action. 
These packings are described, for example, in \cite{Northshield2015}.
We will call these Ford spheres in three dimensional space. 
The picture for $\SL_2(\ZZ[i])$ is shown in  \ref{F:ford-gaussian}. 

\begin{itemize}
	\item Figure~\ref{F:ford-gaussian} shows the packing for $O = \ZZ[i]$.
	\item Figure~\ref{F:ford-eisenstein} shows this packing for $O=\ZZ[\omega]$ where $\omega^2+\omega+1=0$ is a primitive cube root of unity.
	\item Figure~\ref{F:ford-sqrtm5} shows the packing for $\ZZ[\sqrt{-5}]$ which is an example where the domain is not a UFD. 
	\item Figure~\ref{F:ford-sqrtm11} shows a Ford packing for $\ZZ[\sqrt{-11]}$. 
\end{itemize}

 \subsection{Ford circles in $\Hcal^2$}
 What follows supplements a classical treatment of Ford circles in Conway's \emph{The Sensual Quadratic Form} \cite[pages 27--33]{Conway1997}.
 The treatment here is included so that the readers can compare the classical construction in $\Hcal^2$ for comparison with our higher dimensional Clifford-algebraic construction.
We make use of the Satake compactification 
 $$ \Hcal^{2,*}=\Hcal^2 \cup \QQ \cup \lbrace \infty \rbrace.$$ 
We will always introduce rational numbers $p/q$ in reduced form. 
We use unimodularity to replace ``reduced form'' in the generalization. 
The \emph{Ford circles} are the collection of circles 
\begin{equation}\label{E:ford-circles}
\mathcal{P} = \lbrace S_r \colon r \in \QQ \cup \lbrace \infty \rbrace \rbrace
\end{equation}
where if $r=p/q \in \QQ\setminus \lbrace 0 \rbrace$ then $S_{p/q}$ is the unique circle tangent to $p/q \in \QQ$ contained in $\Hcal^{2,*}$ of radius $1/2q^2$.
If $r=0$ then $S_0=S_{0/1}$ is the unique circle tangent to $0\in \QQ$ of radius $1/2$.
If $r=\infty$ we define 
$$S_{\infty} = \{z_0 + z_1 i \in \mathbb{C} \colon z_1 = 1\}.$$
We interpret $S_{\infty}$ as a circle at $\infty$ of infinite radius. 

The Ford circles are intimately tied to Farey fractions and can be viewed as the orbit of $\infty$ under $\SL_2(\ZZ)$ M\"{o}bius transformations, which we now review.

\begin{theorem}\label{tangent}
 	The Ford circles are internally disjoint.  Furthermore, let $p/q, r/s \in \mathbb{Q}$ be reduced, then $S_{p/q}$ and $S_{r/s}$ are tangent at some point if and only if $|pr-qs| = 1$.
\end{theorem}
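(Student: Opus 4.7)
The plan is to reduce both claims (internal disjointness and the tangency criterion) to a single explicit distance computation in the Euclidean upper half plane, since each Ford circle $S_{p/q}$ has center $p/q + \tfrac{1}{2q^2}\, i$ and radius $\tfrac{1}{2q^2}$, which is completely explicit.

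First I would handle the case of two finite Ford circles $S_{p/q}$ and $S_{r/s}$ with $p/q \neq r/s$. Let $d$ denote the Euclidean distance between their centers and $R$ the sum of their radii. A direct calculation gives
\[
d^2 = \left(\frac{p}{q} - \frac{r}{s}\right)^2 + \left(\frac{1}{2q^2} - \frac{1}{2s^2}\right)^2 = \frac{(ps-qr)^2}{q^2 s^2} + \frac{(s^2-q^2)^2}{4q^4 s^4},
\]
while
\[
R^2 = \left(\frac{1}{2q^2} + \frac{1}{2s^2}\right)^2 = \frac{(s^2+q^2)^2}{4q^4 s^4}.
\]
The algebraic identity $(s^2+q^2)^2 - (s^2-q^2)^2 = 4q^2s^2$ then reduces the comparison $d^2 \geq R^2$ to the inequality $(ps-qr)^2 \geq 1$, with equality exactly when $|ps-qr|=1$. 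Since $p/q$ and $r/s$ are in reduced form and distinct, $ps-qr$ is a nonzero integer, so the inequality is automatic; this simultaneously gives internal disjointness (the open balls bounded by $S_{p/q}$ and $S_{r/s}$ are disjoint) and identifies the tangent pairs as exactly those with $|ps-qr|=1$.

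Next I would handle the circle $S_\infty = \{z_0 + z_1 i : z_1 = 1\}$ at infinity by the same kind of direct comparison: the Euclidean distance from the center $p/q + \tfrac{1}{2q^2} i$ of $S_{p/q}$ to the horizontal line $\{z_1=1\}$ is $1 - \tfrac{1}{2q^2}$, and $S_{p/q}$ lies in the closed lower half $\{z_1 \leq 1\}$, so $S_{p/q}$ and $S_\infty$ meet if and only if $1 - \tfrac{1}{2q^2} = \tfrac{1}{2q^2}$, i.e., $q^2 = 1$. Interpreting $\infty$ as the formal fraction $1/0$, this is the condition $|p \cdot 0 - q \cdot 1| = |q| = 1$, so the tangency criterion extends uniformly across all $r \in \QQ \cup \{\infty\}$.

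The arguments above are essentially bookkeeping with the explicit formulas for centers and radii; I do not anticipate a significant obstacle, since the nontrivial step is simply the identity $(s^2+q^2)^2 - (s^2-q^2)^2 = 4q^2s^2$ which converts the tangency equation into the form $(ps-qr)^2 = 1$. The main point of care is being consistent with the convention that $p/q$ is in lowest terms, so that $ps-qr$ really is a nonzero integer whenever the two fractions are distinct, and checking that the $S_\infty$ case matches the convention $\infty = 1/0$ used later in the paper when this computation is generalized to the Clifford setting.
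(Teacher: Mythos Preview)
Your proposal is correct and follows essentially the same route as the paper: both arguments compute the squared distance between the centers of $S_{p/q}$ and $S_{r/s}$, compare it to the square of the sum of the radii, and reduce the inequality to $(ps-qr)^2 \geq 1$ via the same cancellation (the paper writes the comparison as $(ps-qr)^2/s^2q^2 \geq 1/s^2q^2$, you phrase it through the identity $(s^2+q^2)^2-(s^2-q^2)^2=4q^2s^2$, but these are the same step). Your treatment is slightly more complete in that you also verify the $S_\infty$ case and note explicitly that $ps-qr$ is \emph{nonzero} when the fractions are distinct, points the paper's proof leaves implicit.
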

 
 \begin{proof}
 	Let $p/q, r/s \in \mathbb{Q}$ be reduced, then the centers of the Ford circles $S_{p/q}$ and $S_{r/s}$ are $p/q+i/2q^2$ and $r/s+i/2s^2$, respectively.  The circles $S_{p/q}$ and $S_{r/s}$ are internally disjoint if and only if their centers are at least the sum of their radii away from each other.  We calculate that the distance 
 	
 	$$\left({ps-qr \over sq}\right)^2+\left({1/2q^2-1/2s^2} \right)^2  \geq \left( 1/2q^2+1/2s^2 \right)^2 \iff  {(ps-qr)^2 \over s^2q^2 } \geq {1\over s^2q^2} \iff  (ps-qr)^2 \geq 1 .$$
 	
 	This is true as $ps-qr$ is an integer.  Equality holds above precisely when the circles are tangent at a point and this occurs if and only if  $|ps-qr|=1$.  
 	
 \end{proof}
 
 We identify $\PSL_2(\mathbb{Z})$ with the set of integer M\"obius transformations $z \mapsto (az+b)(cz+d)^{-1}$ where $\begin{psmallmatrix} a & b \\ c & d \end{psmallmatrix} \in \SL_2(\ZZ)/\lbrace \pm I \rbrace$.  
 Given $A \in \PSL_2(\mathbb{Z})$ and a Ford circle $S_{p/q}$ for some $p/q \in \mathbb{Q}$, we take $A(S_{p/q})$ to be the image of $S_{p/q}$ under the M\"obius transformation associated to $A$.
 
 Before proceeding we recall that M\"obius transformations map the upper half plane to itself bijectively, and they map circles to circles (more generally this works in all dimensions \cite[\S 4]{Ratcliffe2019}, this is part of a general topic called inversive geometry which goes back to the 19th century -- see also loc. cit.). 
 
 We also need a general fact about geometry. 
 
 \begin{proposition}\label{L:line-proposition}
 	Let $u,v,w \in \mathbb{R}^n$.
 	Let $[u,v]$ be the directed line segment connecting $u$ and $v$.
 	Suppose that $w \in [u,v]$. 
 	Let $\vert u-w \vert = \alpha$ and $\vert v-w \vert = \beta$ so that $\vert u-v\vert = \alpha+\beta$.  
 	Then 
 	\begin{equation}\label{E:interpolation}
 	w={\beta \over \alpha + \beta}u + {\alpha \over \alpha + \beta} v.
 	\end{equation}
 \end{proposition}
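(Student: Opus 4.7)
The plan is to parametrize the segment $[u,v]$ affinely and read off the interpolation coefficients from the norm conditions. Concretely, since $w$ lies on $[u,v]$ I would write
\[
w = (1-t)u + tv \quad \text{for some } t \in [0,1],
\]
which is the standard affine parametrization of the directed segment from $u$ to $v$. This is justified because $[u,v]$ is by definition the image of $[0,1]$ under $t \mapsto (1-t)u + tv$.

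Next I would compute the two displacement vectors. From the parametrization,
\[
w - u = t(v-u) \quad \text{and} \quad w - v = -(1-t)(v-u).
\]
Taking Euclidean norms and using the hypotheses $|u-w|=\alpha$ and $|v-w|=\beta$ yields
\[
\alpha = t\,|v-u| \quad \text{and} \quad \beta = (1-t)\,|v-u|.
\]
Adding these recovers the consistency check $|v-u| = \alpha+\beta$, and (assuming $\alpha+\beta>0$) solving gives $t = \alpha/(\alpha+\beta)$ and $1-t = \beta/(\alpha+\beta)$. Substituting back into $w = (1-t)u + tv$ gives exactly \eqref{E:interpolation}.

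Finally I would handle the degenerate case $\alpha+\beta = 0$, which forces $u=v=w$; here both sides of \eqref{E:interpolation} collapse to $u$ under any convention for the undefined coefficients, so the statement still holds (or one simply excludes this trivial case). There is no real obstacle in this argument — the only thing to be careful about is matching the coefficient of $u$ with $\beta/(\alpha+\beta)$ rather than $\alpha/(\alpha+\beta)$, which corresponds to the geometrically intuitive fact that $w$ is weighted more toward the endpoint it is closer to.
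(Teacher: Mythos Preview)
Your argument is correct and is exactly the standard one: parametrize the segment, read off $t$ from the norm conditions, and substitute. The paper in fact states this proposition without proof, treating it as an elementary fact about affine interpolation, so your write-up simply fills in what the authors left implicit.
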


 We now give an elementary proof that the Ford Circles as defined in \eqref{E:ford-circles} and the orbit of the sphere $S_{\infty}$ under $\PSL_2(\ZZ)$ coincide.
 \begin{theorem}\label{Mobdef}
 	The Ford circles $\mathcal{P}$ are the orbit of $S_{\infty}$ under $\SL_2(\ZZ)$.
 	As a consequence $\SL_2(\mathbb{Z})$ acts on the Ford circles by M\"obius transformations.
 	The action is given by $A \in \SL_2(\mathbb{Z})$ acts via 
 	 $$A(S_{p/q}) = S_{A(p/q)}.$$
 \end{theorem}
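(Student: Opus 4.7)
The plan is to verify $\mathcal{P} = \orb_{\SL_2(\ZZ)}(S_\infty)$ by a direct Möbius computation, and then deduce the equivariance statement as a formal consequence. Two background facts from inversive geometry are used implicitly: Möbius transformations send generalized circles to generalized circles, and they preserve tangency to the boundary $\RR \cup \lbrace \infty \rbrace$ of $\Hcal^2$ (cf.\ \cite[\S 4]{Ratcliffe2019}).

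For the containment $\orb_{\SL_2(\ZZ)}(S_\infty) \subseteq \mathcal{P}$, take $A = \begin{psmallmatrix} a & b \\ c & d \end{psmallmatrix} \in \SL_2(\ZZ)$. If $c = 0$, then $ad = 1$ forces $A = \pm \begin{psmallmatrix} 1 & b \\ 0 & 1 \end{psmallmatrix}$, which acts as an integer horizontal translation and therefore stabilizes the line $S_\infty$. If $c \neq 0$, I would parametrize $S_\infty$ as $z = x + i$ with $x \in \RR$ and rationalize to obtain
$$\Im\!\left(\frac{a(x+i) + b}{c(x+i)+d}\right) = \frac{ad-bc}{(cx+d)^2 + c^2} = \frac{1}{(cx+d)^2 + c^2},$$
which attains its maximum $1/c^2$ at $x = -d/c$, where the real part simplifies to $a/c$. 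Since $A(\infty) = a/c$ and Möbius maps preserve the class of horospheres at the boundary, the image $A(S_\infty)$ is the Euclidean circle tangent to $\RR$ at $a/c$ whose topmost point has height $1/c^2$, i.e.\ the circle of radius $1/(2c^2)$ centered at $a/c + i/(2c^2)$. Because any common divisor of $a$ and $c$ divides $ad - bc = 1$, the fraction $a/c$ is already in lowest terms, so $A(S_\infty) = S_{a/c} \in \mathcal{P}$.

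For the reverse containment $\mathcal{P} \subseteq \orb_{\SL_2(\ZZ)}(S_\infty)$, given any reduced $p/q \in \QQ$, Bezout produces integers $b, d$ with $pd - qb = 1$, so $B = \begin{psmallmatrix} p & b \\ q & d \end{psmallmatrix} \in \SL_2(\ZZ)$ and the previous paragraph gives $B(S_\infty) = S_{p/q}$; the case $p/q = \infty$ is the identity, and $S_0$ is the image of $S_\infty$ under $z \mapsto -1/z$. Once the orbit identity is in hand, the equivariance follows formally: writing $S_{p/q} = B(S_\infty)$ for some $B \in \SL_2(\ZZ)$, we compute
$$A(S_{p/q}) = (AB)(S_\infty) = S_{(AB)(\infty)} = S_{A(B(\infty))} = S_{A(p/q)}.$$

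The one non-trivial step is the imaginary-part computation together with the identification of the image as the Euclidean circle of the claimed radius; the rest is bookkeeping and Bezout. There is no substantial obstacle, but care is needed in separating the $c = 0$ and $c \neq 0$ cases and in noting that coprimality of the entries in either column of an $\SL_2(\ZZ)$-matrix is automatic from the determinant condition.
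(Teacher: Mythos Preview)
Your proof is correct but proceeds differently from the paper's. The paper works instead with $S_0$: for $A = \begin{psmallmatrix} r & p \\ s & q \end{psmallmatrix}$ it notes $A(0) = p/q$, so $A(S_0)$ is tangent to $\RR$ at $p/q$, and then pins down the circle by tracking the single point $i \in S_0$. Using Theorem~\ref{tangent} (that $S_{p/q}$ and $S_{r/s}$ are tangent when $|ps-qr|=1$) together with the interpolation formula of Proposition~\ref{L:line-proposition}, it computes the tangency point $w$ of $S_{p/q}$ and $S_{r/s}$ explicitly and checks $A(i) = w$ by a short algebraic manipulation.

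Your route---parametrizing $S_\infty$ and reading off the radius from the maximum of $\Im A(x+i) = 1/((cx+d)^2+c^2)$---is more self-contained: it avoids invoking the prior tangency theorem and the interpolation lemma, and the $c=0$/$c\neq 0$ case split handles the line-versus-circle dichotomy cleanly. The paper's approach, on the other hand, buys extra geometric information along the way: it shows that the tangency point $i = S_0 \cap S_\infty$ is carried to the tangency point $S_{p/q} \cap S_{r/s}$, which feeds directly into the later Farey/mediant discussion.
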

  \begin{proof}
  	Since $S_0$ and $S_{\infty}$ are related by $z\mapsto -z^{-1}$ we can (and will) show that every $S\in \mathcal{P}$ is a transformation of $S_0$.
 	
 	Let $A =
 	\begin{psmallmatrix}
 	r & p\\
 	s & q
 	\end{psmallmatrix} $
 	be an element of $\PSL_2(\mathbb{Z})$.  We claim that $A(S_0) = S_{p/q}$.  We first observe that $A(0) = p/q$, hence the image of $S_0$ under $A$ is tangent to the real line.  
 	Any circle in the upper half plane which is tangent to the real line is determined by its real point and one other point.
 	(This is because as you expand the radius for the circle tangent to the point on the real line, as it soon as it intersects another point it is determined --- the family is a one parameter family with that parameter being the radius.
 	A single condition fixed that parameter.)
 	
 	Therefore it suffices to check that the point $i \in S_0$ is mapped to $A(i) \in S_{p/q}$.  Notice that the circle at infinity $S_{\infty}$, which is the horizontal line $\Im(z) = 1$, is tangent to $S_0$ at the point $i$.  Moreover $A(\infty) = r/s$.  Therefore we know that  $A(S_0)$ and $A(S_{\infty})$ are both circles which are tangent to the real line and contain the point $A(i)$.  For proving that $A(i) \in S_{p/q}$, we prove the stronger statement that $A(i) \in S_{p/q} \cap S_{r/s}$.  
 	
  We apply equation \eqref{E:interpolation} with $u = p/q + i/2q^2$ and $v = r/s +i/2s^2$, which are the centers of the $S_{p/q}$ and $S_{r/s}$, respectively.  By Theorem \ref{tangent} we know that these circles are tangent, thus the distance between $u$ and $v$ is $1/2q^2+1/2s^2$.    Let $\alpha = 1/2q^2$, $\beta = 1/2s^2$, so that in \eqref{E:interpolation} $w$ is the their point of tangency.
  We get  $$w = {1/2q^2 \over 1/2s^2+1/2q^2}(r/s +i/2s^2)+ {1/2s^2 \over 1/2s^2+1/2q^2}(p/q + i/2q^2).$$  Next, we wish to show that $A(i) = w$ and calculate 
 	\begin{align*}
 	 	w &= {1/2q^2 \over (2q^2+2s^2) /4q^2s^2}(r/s +i/2s^2)+ {1/2s^2 \over (2q^2+2s^2) /4q^2s^2}(p/q + i/2q^2)\\
 	 	&= {s^2 \over q^2+s^2}(r/s +i/2s^2)+ {q^2 \over q^2+s^2}(p/q + i/2q^2)\\
 	 	&= {rs +i/2 \over q^2+s^2}+ {pq+i/2 \over q^2+s^2}= {rs + pq+i \over q^2+s^2}= {s+qi \over s+qi} \cdot {r-pi \over s-qi} = \frac{ri+p}{si+q} = A(i).
 	\end{align*}

 	Conversely, let $p/q \in \mathbb{Q}$ be in reduced form, so that $\gcd(p,q)=1$.
 	We may use the Euclidean algorithm to find $r/s \in \mathbb{Q}$ in reduced form such that $(r)q+(-s)p= 1$, and the M\"obius transformation associated to 
 	$\begin{psmallmatrix}
 	r & p\\
 	s & q
 	\end{psmallmatrix} $ maps $S_0$ to $S_{p/q}$. We conclude that $A(S_0)$ as $A$ ranges over all elements of $\SL_2(\mathbb{Z})$ is the set of all Ford circles.
 	
 	This establishes that all $\mathcal{P}= \orb_{\SL_2(\ZZ)}(S_{\infty})$.
 \end{proof}


For connectedness we will use the division algorithm.
We first observe that the set of Ford circles are topologically connected as a subset of the upper half plane if and only if the tangency graph of the Ford circles is graph connected. 
Here the tangency graph is defined with the vertices being the circles and that two circles are adjacent if and only if they are tangent. 
 \begin{theorem}\label{T:connected-sl2}
  	The set of all Ford circles is connected. 
 \end{theorem}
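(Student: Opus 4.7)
The plan is to show the tangency graph is connected by induction on the denominator $q \ge 1$ of a reduced fraction $p/q$, proving that every $S_{p/q}$ is joined to $S_\infty$ by a path of pairwise tangent Ford circles. Combined with the observation that topological connectedness of the union of circles is equivalent to graph connectedness of the tangency graph, this yields the theorem.

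\textbf{Base case.} When $q = 1$, the circle $S_{p/1}$ has center $p + i/2$ and radius $1/2$, while $S_\infty$ is the horizontal line $\Im z = 1$. The vertical distance from the center of $S_{p/1}$ to this line is $1/2$, equal to the radius, so $S_{p/1}$ and $S_\infty$ are tangent. In particular every integer-labelled Ford circle, including $S_0$, is adjacent to $S_\infty$.

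\textbf{Inductive step.} Assume the claim holds for all reduced fractions with denominator strictly less than $q$, and let $p/q$ be a reduced fraction with $q > 1$. Since $\gcd(p,q) = 1$, by B\'ezout's identity we can find $a,b \in \ZZ$ with $pa - qb = 1$. The general solution has the form $(a + qt, b + pt)$ for $t \in \ZZ$, so we may choose a representative with $1 \le a \le q$; the extreme case $a = q$ would yield $q(p-b) = 1$, hence $q = 1$, contradicting $q > 1$. Thus $0 < a < q$. Any common divisor of $a$ and $b$ would also divide $pa - qb = 1$, so $b/a$ is already in reduced form. By Theorem \ref{tangent}, the identity $|pa - qb| = 1$ implies $S_{p/q}$ is tangent to $S_{b/a}$, and the inductive hypothesis supplies a path from $S_{b/a}$ to $S_\infty$, which extends to one from $S_{p/q}$.

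This completes the induction and therefore the proof. The only real point to verify is that the auxiliary fraction $b/a$ produced by B\'ezout can be arranged to have strictly smaller positive denominator than $q$, which is exactly the content of the solution-shifting step above; this is the ``division algorithm'' alluded to in the discussion preceding the theorem. No deeper obstacle arises at this stage — the argument is entirely elementary, and it is precisely this argument that will need to be upgraded, in higher dimensions, to the Clifford-Euclidean hypothesis invoked later in the paper.
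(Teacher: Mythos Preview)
Your proof is correct and follows essentially the same approach as the paper's: both argue by induction on the denominator (equivalently, on the curvature $2q^2$), using B\'ezout/the Euclidean algorithm to produce a tangent Ford circle with strictly smaller denominator, thereby building a path back to $S_{0}$ or $S_{\infty}$. Your solution-shifting step to force $0<a<q$ is exactly the paper's reduction ``let $aq+s=t$ with $s<q$'' written more cleanly.
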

 \begin{proof}
 	For establishing connectedness of the dual graph of the Ford circles, if suffices to show that for any $p/q \notin \{0,\infty\}$, we have that $S_{p/q}$ is tangent to another Ford circle of larger radius.  By induction on the inverse of the radii of the spheres, which are always integers, it will follows that we may walk from $S_{p/q}$ to the sphere $S_{0/1}$ through a sequence of tangent Ford circles, and then by transitivity the dual graph of the collection of Ford circles is connected. 
 	
 	By Theorem \ref{tangent}, we must prove the existence of a rational $r/s$ such that $|ps-qr|=1$ and $s<q$.  By the Euclidean algorithm, we may find some $t/u$ such that $|pt-qu|= 1$.  Suppose without loss of generality that $q<t$ and let $aq +s= t$, with $a,s \in \mathbb{N}$ and $s < q$, then take $r = (u-ap)$.  We observe that $|ps -qr| = |p(t-aq)-q(u - ap)| = |pt -qu -apq + apq| = 1.$
 \end{proof}

\begin{remark}
It is important to note that in the proof above we are using a Euclidean algorithm to prove connectedness.
\end{remark}

Fix $d$ a denominator. 
The ordered set of 
$$\mathcal{F}_d = \lbrace a/b \in \QQ \cap [0,1] \colon a,b\in \NN, b<d \rbrace$$ are called the \emph{Farey Fractions} of level $d$. 
We will always want to consider fractions in reduced form.
For example when $d=5$ we have
$$ 0, \quad 1/5,\quad 1/4,\quad 1/3, \quad  2/5, \quad 1/2, \quad 3/5, \quad 2/3, \quad 3/4,\quad 4/5, \quad 1.$$
A classical fact is that $S_{p/q}$ and $S_{r/s}$ are adjacent if and only if $p/q$ and $r/s$ are adjacent in $\mathcal{F}_d$ for some $d$. 
Moreover if they are adjacent, the sphere $S_{(p+r)/(q+s)}$ where $(p+s)/(q+s)$ is their \emph{mediant} is mutually tangent to both.
It is also a fact that if $p/q<r/s$ are adjacent in some level then $\begin{psmallmatrix} r& p \\ s & q \end{psmallmatrix} \in \SL_2(\ZZ)$.
For example $\begin{psmallmatrix} 1 & 2 \\ 2 & 5 \end{psmallmatrix}$, $\begin{psmallmatrix} 2 & 1 \\ 5 & 3 \end{psmallmatrix} \in \SL_2(\ZZ)$.
The thing to observe is that if we let $g=\begin{psmallmatrix} r& p \\ s & q \end{psmallmatrix}$ then $g(0) = p/q$, $g(\infty) = r/s$, $g(1) = (r+p)/(s+q)$ and $g(-1) = (-r+p)/(-s+q)$ and that $S_0$ and $S_{\infty}$ are mutually adjacents that $S_{1}$ and $S_{-1}$ are tangent to them both. 
This means taking the image of these under $g$ preserves these adjacencies.
 
 \begin{theorem}
 	Let $p/q$, $r/s\in \QQ$ be reduced and suppose that $|ps-qr|=1$ so that $S_{p/q}$ and $S_{r/s}$ share a point of tangency.  Then $S_{p/q}$ and $S_{r/s}$ are both tangent to a Ford sphere $S_{\alpha}$ if and only if $\alpha$ is a mediant of $p/q$ and $r/s$.
 \end{theorem}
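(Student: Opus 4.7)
My plan is to reduce the entire statement to the tangency criterion of Theorem~\ref{tangent}: two Ford circles $S_{p/q}$ and $S_{r/s}$ are tangent if and only if $|ps - qr| = 1$. Without loss of generality I will assume $ps - qr = 1$ after fixing signs. The proof then becomes a short $2 \times 2$ linear algebra exercise, with a small amount of bookkeeping for the point at infinity.

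For the forward direction, I would take $\alpha$ to be one of the two mediants $(p+r)/(q+s)$ or $(p-r)/(q-s)$ and compute the two relevant determinants directly. For instance for $\alpha = (p+r)/(q+s)$ one gets $p(q+s) - q(p+r) = ps - qr = 1$ and $r(q+s) - s(p+r) = -(ps - qr) = -1$, so Theorem~\ref{tangent} immediately yields tangency of $S_\alpha$ with both $S_{p/q}$ and $S_{r/s}$. I would also note in passing that this mediant is automatically in reduced form: any common divisor of $p+r$ and $q+s$ divides $s(p+r) - r(q+s) = \pm 1$. The other mediant is handled symmetrically.

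For the converse, write $\alpha = a/b$ in reduced form and suppose $S_\alpha$ is tangent to both $S_{p/q}$ and $S_{r/s}$. Theorem~\ref{tangent} then gives $pb - qa = \epsilon_1$ and $rb - sa = \epsilon_2$ with $\epsilon_1, \epsilon_2 \in \{\pm 1\}$. This is a linear system in $(a,b)$ whose coefficient matrix has determinant $ps - qr = 1$, so Cramer's rule yields the unique solution $a = \epsilon_1 r - \epsilon_2 p$ and $b = \epsilon_1 s - \epsilon_2 q$. The four sign combinations collapse in pairs (since $(\epsilon_1,\epsilon_2)$ and $(-\epsilon_1,-\epsilon_2)$ give the same rational), leaving exactly the two mediants $(p+r)/(q+s)$ and $(r-p)/(s-q)$.

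The main subtlety I anticipate is the edge case $\alpha = \infty$: if $S_\infty$ is tangent to both $S_{p/q}$ and $S_{r/s}$, the tangent line $\Im(z) = 1$ forces $q = s = 1$, and then $|p - r| = 1$. Viewing $\infty$ as the formal symbol $\pm 1/0$ lets this fit into the same formula, since one of the two mediants $(p \pm r)/(q \pm s)$ then has zero denominator. This is the only real bookkeeping hiccup, and aside from it the proof is entirely mechanical once Theorem~\ref{tangent} is invoked.
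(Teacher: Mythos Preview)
Your proof is correct and takes a genuinely different route from the paper's. The paper argues via the M\"obius action: writing $A = \begin{psmallmatrix} r & p \\ s & q \end{psmallmatrix} \in \SL_2(\ZZ)$, it pulls the pair $(S_{p/q}, S_{r/s})$ back to $(S_0, S_\infty)$ via $A^{-1}$, observes directly that the only Ford circles tangent to both $S_0$ and $S_\infty$ are $S_{\pm 1}$, and then pushes forward to get exactly $A(\pm 1) = (p \pm r)/(q \pm s)$. Your approach instead stays entirely inside the tangency criterion of Theorem~\ref{tangent} and solves the resulting $2\times 2$ linear system by Cramer's rule; this is more self-contained and arguably cleaner as a stand-alone argument, and your check that the mediant is already in lowest terms is a nice touch. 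On the other hand, the paper's approach absorbs your $\alpha = \infty$ edge case automatically (the group action carries $S_\infty$ along with everything else), and more importantly it is the argument that generalizes: in the Clifford setting later in the paper there is no simple determinantal tangency criterion to invoke, but transitivity of the $\SL_2(O)$-action still reduces adjacency questions to a single base configuration.
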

 
 \begin{proof}
 	Let $A = \begin{psmallmatrix}
 	r & p\\
 	s & q
 	\end{psmallmatrix}$ be an element of $\SL_2(\mathbb{Z})$.  
 	Tangency of Ford circles is preserved under action of $\SL_2(\mathbb{Z})$, and $A^{-1}(S_{p/q}) = S_0$ and $A^{-1}(S_{r/s}) = S_{\infty}$.  The Ford circles which are tangent to both $S_0$ and $S_{\infty}$ are $S_1$ and $S_{-1}$.  Therefore the circles tangent to  $S_{p/q}$ and $S_{r/s}$ are the circles $S_{A(1)} = S_{p + r /q + s}$ and $S_{A(-1)} = S_{p - r /q - s}$.
 \end{proof}

\section{Ford Spheres for $\PSL_2(O)$}\label{S:ford-spheres}
As is usual by ``spheres'' we mean horocycles --- this means either genuine spheres or codimension one hyperplanes. 
In particular we define the \emph{sphere at infinity} in $\Hcal^{n+1}$ we define to be 
$$S_{\infty}=V_n + i_n$$
Let $O$ be an order in $\quat{-d_1,\ldots,-d_{n-1}}{\QQ}$. 
We will let $\Gamma = \PSL_2(O)$.
\begin{definition}
	An element $x \in V_n = \partial\Hcal^{n+1}$ is call \emph{principal} if and only if $x \in \orb_{\Gamma}(\infty)$. 
	If $\gamma(\infty)=x$ the sphere $\gamma(S_{\infty})=S_x$ will be a called a \emph{principal sphere} or a \emph{Ford sphere}.
\end{definition}
It will be important to note that $S_{\infty}$ is inscribed in the bubble domain $B$ which is an orbit of the fundamental domain under the stabilizer of $\Gamma$ (Definition~\ref{D:bubble}).

\begin{remark}
	Note that this proves the statement about tiling in Theorem~\ref{T:main} of the introduction.
	The difficult part of this tiling claim is the explicit description of the fundamental domain which appears in \cite[\S7]{Dupuy2024} and which we are assuming.
\end{remark}

An image of $S_{\infty}$ inscribed inside the region $B$ is pictured in Figure~\ref{F:h2-spheres}.
\begin{figure}[h]
	\begin{center}
		\includegraphics[scale=0.5]{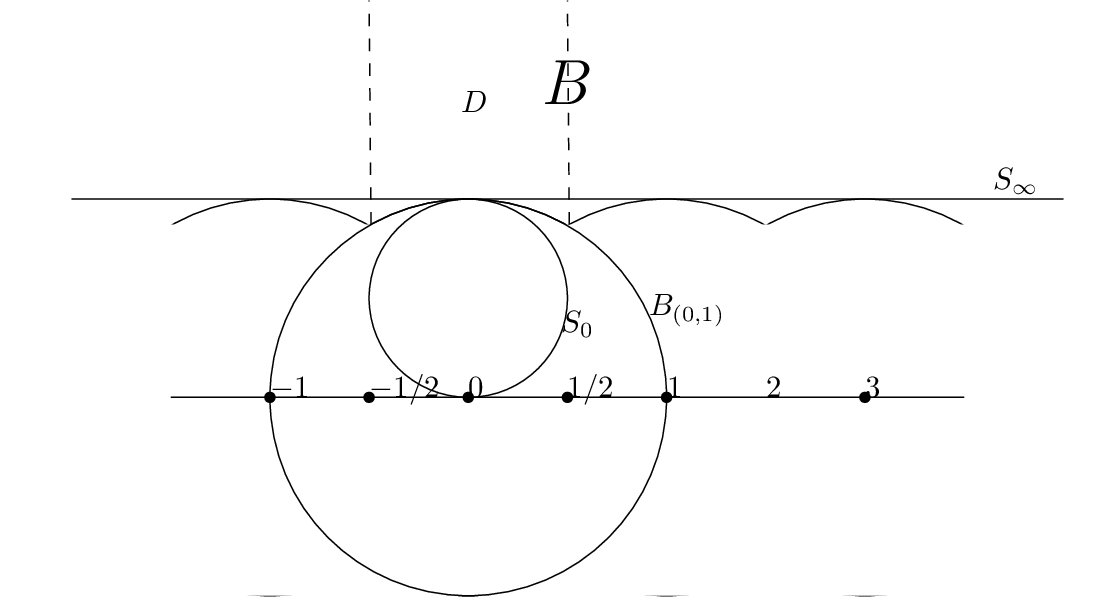}
	\end{center}
	\caption{A picture of $B$ with its inscribed sphere $S_{\infty}$ in $\Hcal^2$. }\label{F:h2-spheres}
\end{figure}

A sphere packing is usually defined to be a dense collection of spheres $\mathcal{P}$ where for every $S$ and $T$ in $\mathcal{P}$ the interior of the spheres don't intersect.
In contrast, our collection of spheres are internally disjoint, but are only dense on the boundary.
A collection of spheres is \emph{integral} if for any sphere $S$ in the collection the curvature of $S$ is an integer. 

Integrality is proved in \S\ref{S:integrality} and internal disjointness is proved in \S\ref{S:internal-disjointness}.
Internal disjointness relies on the construction of the fundamental domain (\cite[\S 7 ]{Dupuy2024}).
Figure~\ref{F:h2-spheres} shows the picture of $\Hcal^2$ where $S_{\infty}$ is inscribed in $B$.

\subsection{Reverse Clifford Hermitian Matrices}
This section should be compared to \cite[\S7]{Sheydvasser2019} where some similar computations are performed in the quaternionic case.

\begin{definition}
For a matrix $A=(a_{ij})$ in $M_{m,n}(\CC_{n})$ we define the \emph{reverse Clifford adjoint} (or simply \emph{reverse adjoint}) by 
		$$A=\begin{pmatrix}
			a_{11} & a_{12} & \cdots & a_{1n} \\
			a_{21} & a_{22} & \cdots & a_{2n} \\
			\vdots & \vdots & \ddots & \vdots \\
			a_{m1} & a_{m2} & \cdots & a_{mn}
		\end{pmatrix} \mapsto A^{\dag}=\begin{pmatrix}
		\bar{a}_{m n} & \bar{a}_{m-1n} & \cdots & \bar{a}_{1n} \\
		\bar{a}_{m n-1} & \bar{a}_{m-1 n-1} & \cdots  & a_{1 n-1}\\
		\vdots & \vdots & \ddots & \vdots \\
		\bar{a}_{m 1} & \bar{a}_{m-1 2} & \cdots & \bar{a}_{11}
		\end{pmatrix}$$
\end{definition}
This operation is an involution.
Also, observe that because $\Delta(A^{\dag}) = \overline{\Delta(A)}$ the group $\SL_2(\CC_n)$ is preserved under the Clifford adjoint.
We say $A \in M_2(\CC_n)$ is \emph{Clifford reverse Hermitian} (or simply \emph{reverse Hermitian} for convenience) if and only if it has Clifford vector entries and $A = A^{\dag}$. 

\begin{remark}
	This is in contrast to \cite[\S 7.2]{Dupuy2024} where Clifford-Hermitian matrices were needed. 
	This is interesting as there is some sort of duality between the Hermitian matrices and the Clifford-Hermitian matrices corresponding to the duality between Ford Spheres and Bubbles. 
	In terms of hyperbolic geometry and the Levi-decomposition, the Bubbles in the boundary of the fundamental domains for $\SL_2(O)$ described in \cite[\S7.7]{Dupuy2024} are geodesic and the Ford spheres are horoballs.
	See \cite[\S 5.7]{Dupuy2024} for more on this discussion.
\end{remark}

Note that if $A$ is reverse Hermitian then it has the form $A = \begin{psmallmatrix}
\beta & \alpha \\
\gamma & \overline{\beta}
\end{psmallmatrix},$ for $\alpha,\gamma \in \RR,$ and $\beta \in V_{n}.$
We will let the set of reverse Hermitian matrices be denoted by $M_2(\CC_n)^{\rherm}$.
Let $A\in R$. 
\begin{definition}
Its \emph{discriminant} or \emph{naive determinant} we denote by 
\begin{equation}\label{eqn:disc}
 \delta(A) = \vert \beta \vert^2 - \alpha \gamma.
\end{equation}
\end{definition}

\begin{definition}
The \emph{reverse Hermitian form} associated to $A$ is  
\begin{equation}\label{E:rherm-form}
q^A(z) = \alpha \overline{y}y + \overline{y}\beta x+\overline{x}\overline{\beta}y+\gamma \overline{x}x, \qquad z = \begin{pmatrix} x \\ y \end{pmatrix} \in \CC_n^2
\end{equation}
\end{definition}

If we write $z = \begin{pmatrix} x \\ y \end{pmatrix}$ we can also write this as $q^A(x,y) = z^{\dag}Az.$ 
Let $A\in M_2(\CC)^{\rherm}$ be a reverse Hermitian matrix. 
\begin{definition}
	The \emph{reverse Hermitian quadric} $S_A$ associated to $A$ is defined by the equation 
	\begin{equation}\label{E:quadric}
	S_A \colon \quad \gamma \vert x \vert^2 + \beta x + \overline{x} \overline{\beta} + \alpha =0, 
	\end{equation}
\end{definition}
We now show that all of these quadrics are spheres and compute their centers and radii.
\begin{theorem}\label{L:r-and-p}
	For every $A=\begin{psmallmatrix}
	\beta & \alpha \\
	\gamma & \overline{\beta}
	\end{psmallmatrix}\in M_2(\CC)^{\rherm}$ the reverse Hermitian quadric $S_A$ is a sphere of radius $r= \frac{\sqrt{\vert \beta\vert^2 - \alpha\gamma}}{\gamma}$ and center $p=-\overline{\beta}/\gamma.$
\end{theorem}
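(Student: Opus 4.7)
The plan is to verify the identity $|x-p|^2 = r^2$ with the claimed $p$ and $r$ directly, by expanding the left-hand side and comparing to the defining equation \eqref{E:quadric}. Since $\gamma \in \RR$, setting $p = -\overline{\beta}/\gamma$ gives $\overline{p} = -\beta/\gamma$, and since $|x|^2 = x\overline{x}$ one computes
\begin{equation*}
|x-p|^2 = (x-p)(\overline{x}-\overline{p}) = |x|^2 + \frac{x\beta + \overline{\beta}\,\overline{x}}{\gamma} + \frac{|\beta|^2}{\gamma^2}.
\end{equation*}
Multiplying through by $\gamma$ and rearranging, $|x-p|^2 = r^2$ is equivalent to
\begin{equation*}
\gamma|x|^2 + x\beta + \overline{\beta}\,\overline{x} + \alpha = 0,
\end{equation*}
provided $r^2 = (|\beta|^2 - \alpha\gamma)/\gamma^2$, which matches the claimed radius.

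The remaining step—and the only delicate one—is to match this with the defining equation of $S_A$, namely $\gamma|x|^2 + \beta x + \overline{x}\,\overline{\beta} + \alpha = 0$. So I need the identity
\begin{equation*}
x\beta + \overline{\beta}\,\overline{x} = \beta x + \overline{x}\,\overline{\beta}
\end{equation*}
for $x,\beta \in V_n$. I would prove this by noting that Clifford conjugation reverses products, so $\overline{\beta}\,\overline{x} = \overline{x\beta}$ and $\overline{x}\,\overline{\beta} = \overline{\beta x}$; the identity then reduces to
\begin{equation*}
x\beta + \overline{x\beta} = \beta x + \overline{\beta x}.
\end{equation*}
For $x,\beta \in V_n$, the product $\beta x$ (and likewise $x\beta$) lies in grades $0,1,2$ of $\CC_n$. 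Clifford conjugation preserves grade $0$ and negates grades $1$ and $2$, so for any $\xi$ of pure grade $\leq 2$ the combination $\xi + \overline{\xi}$ equals twice the scalar part of $\xi$. Thus it suffices to check that $\beta x$ and $x\beta$ have the same scalar part, which is immediate from the Clifford relations: writing $\beta = b_0 + b_W$ and $x = x_0 + x_W$ with $b_W, x_W \in \mathrm{span}(i_1,\dots,i_{n-1})$, both scalar parts equal $b_0 x_0 - \sum_{j\geq 1} b_j x_j$ because $i_a i_b + i_b i_a = -2\delta_{ab}$.

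Having established the identity, the defining equation becomes $|x-p|^2 = (|\beta|^2 - \alpha\gamma)/\gamma^2 = \delta(A)/\gamma^2$, so $S_A$ is exactly the Euclidean sphere of center $p = -\overline{\beta}/\gamma$ and radius $r = \sqrt{\delta(A)}/\gamma$, as claimed. I expect the main conceptual obstacle to be the grade-parity bookkeeping in the cross-term identity; everything else is a one-line completion of the square in disguise.
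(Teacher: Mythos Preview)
Your proof is correct and is essentially the same completing-the-square argument as the paper's. The only difference is a self-inflicted detour: you expand $|x-p|^2$ as $(x-p)\,\overline{(x-p)}$, which produces cross terms $x\beta + \overline{\beta}\,\overline{x}$ and then forces you to prove the commutation identity $x\beta + \overline{\beta}\,\overline{x} = \beta x + \overline{x}\,\overline{\beta}$. The paper instead writes $|x+\overline{\beta}/\gamma|^2$ as $\overline{(x+\overline{\beta}/\gamma)}\,(x+\overline{\beta}/\gamma)$, whose expansion gives cross terms $\beta x/\gamma + \overline{x}\,\overline{\beta}/\gamma$ that match the defining equation \eqref{E:quadric} on the nose, so no commutation lemma is needed. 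Your grade-parity argument for the identity is fine, but you can delete that whole paragraph by simply expanding in the other order.
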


\begin{proof}
	By definition $S_A$ is given by the equation $\gamma \vert x \vert^2 + \beta x + \overline{x} \overline{\beta} + \alpha = 0$. This implies $\overline{x} x + \frac{\beta x}{\gamma} + \frac{\overline{x} \overline{\beta}}{\gamma}  = -\frac{\alpha}{\gamma}$.
	As $\frac{\beta}{\gamma} + \overline{x} = \overline{x + \frac{\overline{\beta}}{\gamma}}$, we can simplify to obtain $(\frac{\beta}{\gamma} + \overline{x})(x + \frac{\overline{\beta}}{\gamma}) - \frac{\vert \beta \vert^2}{\gamma^2} =  -\frac{\alpha}{\gamma}$,
	which implies 
	$$\vert x + \frac{\overline{\beta}}{\gamma} \vert^2 =  \frac{\vert \beta \vert^2 - \alpha \gamma}{\gamma^2},$$
	which gives our result.
\end{proof}

In \cite[\S 7.3]{Dupuy2024} it is shown that $\Hcal^{n+1}$ is isomorphic to classes of positive definite Clifford Hermitian matrices as $\SL_2(\CC_n)$-spaces.
Presently, we are going to relate actions on Ford spheres $\mathcal{P}$ to the action on \emph{reverse} Hermitian matrices $M_2(\CC_n)^{\rherm}$. 
The duality here is striking.

\begin{proposition}
	The Ford sphere at infinity $S_{\infty}$ is a reverse Hermitian quadric $S_{A_{\infty}}$ (see \eqref{E:quadric}). 
	That is there exists some $A_{\infty} \in M_2(\CC_n)^{\rherm}$ such that $S_{\infty}=S_{A_{\infty}}$.
\end{proposition}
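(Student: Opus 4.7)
The plan is to exhibit a concrete $A_\infty$ matching the geometry and then verify directly that the equation \eqref{E:quadric} cuts out $S_\infty = V_n + i_n$. Since $S_\infty$ is the horosphere based at $\infty$, the associated projective form $q^{A_\infty}$ must vanish at $(1:0) \in \PP^1$, which forces the lower-left entry $\gamma$ of $A_\infty$ to be $0$. With $\gamma = 0$ the quadric equation degenerates to the linear condition $\beta x + \overline{x}\overline{\beta} + \alpha = 0$, a codimension-one affine subspace of $V_{n+1}$, which is the right form for a horizontal hyperplane.

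Guided by this, my candidate is
\[
A_\infty := \begin{pmatrix} i_n & 2 \\ 0 & -i_n \end{pmatrix},
\]
with $\beta = i_n$, $\overline{\beta} = -i_n$, $\alpha = 2$, $\gamma = 0$, and discriminant $\delta(A_\infty) = \abs{i_n}^2 - 2 \cdot 0 = 1$. Verifying $A_\infty = A_\infty^\dagger$ is direct: the antidiagonal entries $2, 0 \in \RR$ are fixed by $\overline{\cdot}$, and the swap-and-conjugate on the diagonal uses $\overline{\pm i_n} = \mp i_n$ to return $A_\infty$.

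The main technical ingredient is the Clifford identity
\[
\overline{x}\, i_n - i_n\, x = 2 x_n, \qquad x = x_0 + x_1 i_1 + \cdots + x_n i_n \in V_{n+1}.
\]
This is proved by direct expansion: using $\overline{x} = x_0 - \sum_{j \geq 1} x_j i_j$, the relation $i_n^2 = -1$, and the anticommutation $i_n i_j = -i_j i_n$ for $1 \leq j < n$, the mixed terms $x_j i_j i_n$ with $j < n$ appear identically in both $\overline{x}\, i_n$ and $i_n\, x$ and cancel in the difference, while the $x_n i_n^2 = -x_n$ contributions appear with opposite signs and combine additively to $2 x_n$. Plugging this into \eqref{E:quadric} yields
\[
\beta x + \overline{x}\,\overline{\beta} + \alpha = i_n x - \overline{x}\, i_n + 2 = -2 x_n + 2,
\]
which vanishes exactly when $x_n = 1$; that is, on $S_\infty = V_n + i_n$. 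The only subtle point is the sign-tracking in the Clifford identity above; once that is in hand, the proposition drops out by directly unpacking the definition of the reverse Hermitian quadric at this degenerate ($\gamma = 0$) matrix.
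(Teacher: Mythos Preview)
Your proof is correct and takes essentially the same approach as the paper: both exhibit a concrete reverse Hermitian matrix with $\gamma=0$ and verify directly that the degenerate quadric equation $\beta x + \overline{x}\,\overline{\beta}+\alpha=0$ reduces to $x_n=1$. Your $A_\infty$ is exactly twice the paper's choice $\begin{psmallmatrix} i_n/2 & 1 \\ 0 & -i_n/2 \end{psmallmatrix}$, so the quadrics coincide; the only practical difference is that your normalization gives $\delta(A_\infty)=1$ rather than $1/4$, which you would need to track if you feed this matrix into the later radius computation (Theorem~\ref{T:integral}).
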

\begin{proof}
	The sphere at infinity $S_{\infty}$ is defined by the equation $x_n-1=0$.
	From equation \eqref{E:quadric} we see that $\alpha=-1$ and using that $\beta x + \overline{x} \overline{\beta} = 2(\beta_0 x_0 - \vec{\beta}\cdot \vec{x})=x_n$ we see that $\beta=-i_n/2$ suffices and hence if  we define
	\begin{equation}\label{E:ainf}
	A_{\infty} := 
	\begin{pmatrix}
	i_n/2 & 1 \\
	0 & -i_n/2
	\end{pmatrix}
	\end{equation}
	we have $S_{A_{\infty}} = S_{\infty}$.
	The same equation could be done with $\beta = -i_n/2$ and $\alpha=-1$ and $\gamma=0$.
\end{proof}

\begin{proposition}\label{lem:invariance}
	\begin{enumerate}
		\item There is a well-defined action of $\PSL_2(\CC_n)$ on the set of reverse Hermitian matrices $M_2(\CC_n)^{\rherm}$ given by $A^g = g^{\dag} A g$.
		\item \label{I:conjugation-trick} $q^A(g^{\dag}z) = q^{A^g }(z)$.
	\end{enumerate}
\end{proposition}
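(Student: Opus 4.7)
The plan is to extract two fundamental properties of the reverse adjoint $\dag$ and let them do all the work: (i) $\dag$ is an involution, $(A^{\dag})^{\dag} = A$; and (ii) $\dag$ is anti-multiplicative, $(AB)^{\dag} = B^{\dag} A^{\dag}$, whenever the product is defined. Writing $A^{\dag} = J \overline{A}^T J$, where $J$ is the anti-diagonal identity (so $J^2 = I$), property (ii) follows from the anti-multiplicativity of Clifford conjugation via
$$(AB)^{\dag} = J\,\overline{AB}^T\,J = J\,\overline{B}^T\overline{A}^T\,J = (J\overline{B}^T J)(J\overline{A}^T J) = B^{\dag}A^{\dag}.$$
Property (i) is immediate because Clifford conjugation and the anti-diagonal transpose are each involutions that commute with one another.

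With these two properties in hand, part (1) reduces to a routine verification of the axioms of a group action. The key closure step is: if $A = A^{\dag}$, then
$$(g^{\dag}Ag)^{\dag} = g^{\dag}A^{\dag}g^{\dag\dag} = g^{\dag}Ag,$$
so $A^g$ is again reverse Hermitian. Associativity is the computation $(A^g)^h = h^{\dag}(g^{\dag}Ag)h = (gh)^{\dag}A(gh) = A^{gh}$, and the identity axiom follows from $I^{\dag} = I$. To descend the action from $\SL_2(\CC_n)$ to $\PSL_2(\CC_n)$ one checks that scalar matrices in the kernel of the projection act trivially; this reduces to $\overline{\lambda}\lambda = 1$ for such scalars $\lambda$, which is a consequence of the $\SL_2(\CC_n)$-normalization.

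For part (2), the identity is a direct substitution using the same properties. Writing $q^A(w) = w^{\dag} A w$ and setting $w = g^{\dag}z$, involution plus anti-multiplicativity give $w^{\dag} = z^{\dag} g^{\dag\dag} = z^{\dag} g$, so
$$q^A(g^{\dag}z) = z^{\dag}(g A g^{\dag})z,$$
and the right-hand side is identified with $q^{A^g}(z) = z^{\dag} A^g z$ by the definition of the action. The main obstacle I anticipate is not the algebra itself, which is formal once (i) and (ii) are in hand, but careful bookkeeping of the $\dag$-convention applied to $2 \times 1$ column vectors (which reverses their entries as well as conjugating), together with making sure the direction of the action in (1) is consistent with the direction of substitution in (2). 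Once the conventions are pinned down, both parts follow immediately from the involution and anti-multiplicativity of $\dag$.
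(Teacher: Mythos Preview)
Your treatment of part (1) is essentially the paper's argument, carried out with more care: both reduce the closure statement to anti-multiplicativity $(CB)^{\dag} = B^{\dag}C^{\dag}$ together with $(A^{\dag})^{\dag} = A$, and you additionally spell out associativity, the identity axiom, and the descent to $\PSL_2$.

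There is a genuine gap in part (2). Your computation correctly gives
\[
q^A(g^{\dag}z) = (g^{\dag}z)^{\dag}A(g^{\dag}z) = z^{\dag}\,(gAg^{\dag})\,z,
\]
but you then identify $gAg^{\dag}$ with $A^g$. By the definition fixed in part (1), $A^g = g^{\dag}Ag$, which is not $gAg^{\dag}$ unless $g = g^{\dag}$. In fact the printed statement contains a typo: what the paper actually proves, and what is used immediately afterward to compute $g^{-1}(S_A)$, is the identity $q^A(gz) = q^{A^g}(z)$, obtained via
\[
q^A(gz) = (gz)^{\dag}A(gz) = z^{\dag}g^{\dag}Agz = z^{\dag}A^g z = q^{A^g}(z).
\]
You correctly anticipated that the main obstacle was matching the direction of the action in (1) with the direction of substitution in (2), and that is precisely where the argument slips: the substitution should be $gz$, not $g^{\dag}z$, to be compatible with $A^g = g^{\dag}Ag$. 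Rather than forcing the identification at the last step, you should have flagged the inconsistency.
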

\begin{proof}
It is enough to show that for $A \in M_2(\CC_n)^{\rherm}$ and $g \in \SL_2(\CC_n)$ that $(A^g)^{\dag} = A^g$. This follows directly from $(CB)^{\dag} = B^{\dag} C^{\dag}$ for all $C,B\in M_2(\CC_n)$.

For the second part we use that $q^A(z) =z^{\dag} A z $. 
This means that $q^A(gz) = (gz)^{\dag} A (gz) = z^{\dag} g^{\dag} A g z = q^{A^g}(z)$.
\end{proof}

We can now compute the equation of any sphere $S_A$ under M\"{o}bius transformations; before proceeding we note that the action of $g \in \SL_2(\CC_n)$ on $M_2(\CC_n)$ is given by $A \mapsto g^{\dag}Ag$ which is a right action, and the action on $V_n$ is given by M\"{o}bius transformations.
We are now ready to compute the image $g^{-1}(S_{\infty})$ for $g \in \PSL_2(\CC_n)$.
\begin{proposition}\label{L:image-of-sphere}
	If $g \in \SL_2(\CC_{n+1})$ and $A\in M_2(\CC_n)^{\rherm}$ then $g^{-1}(S_{A}) = S_{g^{\dag}Ag}$.
\end{proposition}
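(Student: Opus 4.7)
The plan is to reduce the statement to the invariance identity of Proposition~\ref{lem:invariance}(2) by treating the equation of $S_A$ as the vanishing locus of the homogeneous form $q^A$ and exploiting the projective-to-affine correspondence inherent in the M\"obius action. Concretely, equation~\eqref{E:quadric} says that for $w \in V_n$ we have $w \in S_A$ iff $q^A\!\begin{psmallmatrix} w \\ 1 \end{psmallmatrix} = 0$. So the claim $g^{-1}(S_A) = S_{g^\dag A g}$ is the assertion that, for $z \in V_n$,
$$q^A\!\begin{pmatrix} g(z) \\ 1 \end{pmatrix} = 0 \iff q^{g^\dag A g}\!\begin{pmatrix} z \\ 1 \end{pmatrix} = 0.$$

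The key step is to relate the affine M\"obius image $g(z) = (az+b)(cz+d)^{-1}$ to the matrix action of $g$ on the homogeneous column $\begin{psmallmatrix} z \\ 1 \end{psmallmatrix}$. Assuming $cz+d \in \CC_n^\times$, one has the identity
$$\begin{pmatrix} g(z) \\ 1 \end{pmatrix} = \left( g \begin{pmatrix} z \\ 1 \end{pmatrix} \right) (cz+d)^{-1},$$
since $g\begin{psmallmatrix} z \\ 1 \end{psmallmatrix} = \begin{psmallmatrix} az+b \\ cz+d \end{psmallmatrix}$. Next I would establish a right-scalar homogeneity for $q^A$: from the definition of $\dag$ one checks $(w\lambda)^\dag = \overline{\lambda}\, w^\dag$ for a column $w$ and $\lambda \in \CC_n$, hence $q^A(w\lambda) = \overline{\lambda}\, q^A(w)\, \lambda$. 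Applying this to $w = g\begin{psmallmatrix} z \\ 1 \end{psmallmatrix}$ with $\lambda = (cz+d)^{-1}$ and then invoking Proposition~\ref{lem:invariance}(2), which gives $q^A\!\left( g \begin{psmallmatrix} z \\ 1 \end{psmallmatrix}\right) = q^{g^\dag A g}(z,1)$, yields
$$q^A\!\begin{pmatrix} g(z) \\ 1 \end{pmatrix} = \overline{(cz+d)^{-1}}\; q^{g^\dag A g}(z,1)\; (cz+d)^{-1}.$$
Since both $(cz+d)^{-1}$ and its Clifford conjugate lie in the Clifford group (as $cz+d$ does, in the regime where the M\"obius action makes sense), two-sided conjugation by a unit does not affect vanishing, and the desired equivalence follows.

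The main obstacle I expect is handling the non-generic points: those $z$ for which $cz+d$ fails to be a unit (so that $g(z) = \infty$) and the point $z = \infty$ itself, together with the possibility that $S_A$ is a hyperplane (the $\gamma = 0$ case), in which $\infty$ must be treated as a point of $S_A$. I would deal with these either by a direct passage to the limit in $z$, using that both sides of the proposed equality are continuous in $z$ on the full boundary $V_n \cup \{\infty\}$, or by a short case analysis combined with the fact that $\PSL_2(\CC_n)$ is generated by translations, dilations, and the inversion $z\mapsto -z^{-1}$, for each of which the identity can be checked by inspection and composed using the cocycle-like behaviour $(g_1 g_2)^\dag A (g_1 g_2) = g_2^\dag (g_1^\dag A g_1) g_2$. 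A secondary bookkeeping point is the careful verification of $(w\lambda)^\dag = \overline{\lambda}\,w^\dag$ from the definition of the reverse adjoint, since this mixes the reversal involution with Clifford conjugation and is where non-commutativity of $\CC_n$ enters most visibly.
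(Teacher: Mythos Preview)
Your approach is essentially the same as the paper's: both reduce to Proposition~\ref{lem:invariance}(\ref{I:conjugation-trick}) via the identification of $S_A$ with the vanishing locus of $q^A(\cdot,1)$. The paper's proof is a terse one-liner
\[
g^{-1}(S_A) = \lbrace y \colon q^A(gz)=0,\ z=(y,1)\rbrace = \lbrace y \colon q^{g^\dag A g}(z)=0\rbrace = S_{g^\dag A g},
\]
which silently identifies $q^A(g(y),1)$ with $q^A\bigl(g\begin{psmallmatrix} y\\1\end{psmallmatrix}\bigr)$. You have actually filled in precisely the step the paper elides: the right-scalar homogeneity $q^A(w\lambda)=\overline{\lambda}\,q^A(w)\,\lambda$ together with $\begin{psmallmatrix} g(z)\\1\end{psmallmatrix} = \bigl(g\begin{psmallmatrix} z\\1\end{psmallmatrix}\bigr)(cz+d)^{-1}$ is exactly what is needed to make that identification rigorous, and your verification of $(w\lambda)^\dag=\overline{\lambda}\,w^\dag$ from the reverse-adjoint definition is the right bookkeeping. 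Your treatment of the degenerate cases ($cz+d$ non-invertible, $z=\infty$, $\gamma=0$) is also more thorough than the paper, which does not address them at all; either of your two suggested remedies (continuity on $V_n\cup\{\infty\}$, or reduction to generators) is adequate. In short, your proposal is correct and is a more careful version of the paper's own argument.
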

\begin{proof}
	The proof is just the elementary ``conjugation trick'' given in the proof of Proposition~\eqref{lem:invariance}, item \ref{I:conjugation-trick},
	\begin{align*}
	g^{-1}(S_A) &= \lbrace  g^{-1}x \colon q^A(x,1) =0 \rbrace=\lbrace y \colon q^A(gz)=0, z = (y,1) \rbrace=\lbrace y \colon  q^{g^{\dag}Ag}(z)=0, \ z=(y,1) \rbrace =S_{g^{\dag}Ag}.
	\end{align*}
\end{proof}	

\subsection{Integrality}\label{S:integrality}
In this section we prove integrality of $\mathcal{P}$, meaning we show that the curvatures (reciprocals of the radii) are integers.
\begin{theorem}\label{T:associated-A}
	Let $g = \begin{psmallmatrix} a & b \\ c & d \end{psmallmatrix} \in \SL_2(\CC_n)$.
	We have 
	$$g^{-1}(S_{\infty}) = S_A, \quad A = \begin{pmatrix}\beta & \vert d \vert^2 \\  \vert c \vert^2 &  \overline{\beta} \end{pmatrix}, \quad \beta = \overline{d}c + \frac{i_n}{2}(d^*a-b^*c).$$
\end{theorem}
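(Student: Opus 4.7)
The plan is to directly apply Proposition~\ref{L:image-of-sphere} with $A=A_\infty$ from \eqref{E:ainf} and then compute $g^\dag A_\infty g$ entry-by-entry, matching it against the claimed matrix. Since $g^{-1}(S_\infty)=g^{-1}(S_{A_\infty})=S_{g^\dag A_\infty g}$, the whole task reduces to an identification of matrices in $M_2(\CC_{n+1})^{\rherm}$.

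First I would record the reverse adjoint of $g=\begin{psmallmatrix} a & b \\ c & d\end{psmallmatrix}$, which from the definition is $g^\dag=\begin{psmallmatrix} \overline{d} & \overline{b} \\ \overline{c} & \overline{a}\end{psmallmatrix}$, and compute
\[
A_\infty g = \begin{pmatrix} (i_n/2)a+c & (i_n/2)b+d \\ -(i_n/2)c & -(i_n/2)d \end{pmatrix}.
\]
Multiplying on the left by $g^\dag$ produces four expressions, each a sum of a ``pure'' term (like $\overline{d}c$ or $\overline{d}d$) together with terms carrying a factor of $i_n/2$.

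The technical heart of the argument is moving $i_n$ past entries of $g$ using the two identities I would establish first as lemmas at the top of the proof. The first is the commutation rule: since $i_n$ anticommutes with every $i_j$ for $j<n$, for any $x\in\CC_n$ one has $i_n x = x' i_n$. Combined with $\bar{x}=(x^*)'=(x')^*$, this yields the crucial identity
\[
\overline{x}\, i_n \;=\; i_n\, x^*,
\]
which I would use to convert $\overline{d}(i_n/2)a$ into $(i_n/2)d^*a$, and similarly for the $\overline{b}$ and $\overline{c}$ terms. The second lemma captures the constraint hidden in ``$g\in\SL_2(\CC_n)$'': the hypotheses $ac^{-1},\,bd^{-1}\in V_n$ are equivalent to the pairing relations
\[
a^*c = c^*a,\qquad b^*d = d^*b,
\]
which follow because a Clifford vector $v\in V_n$ satisfies $v^*=v$ applied to $v=bd^{-1}$ (and similarly $v=ac^{-1}$), after clearing denominators.

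With these in hand, the four entries fall out. The $(1,1)$-entry becomes $\overline{d}c + (i_n/2)(d^*a - b^*c)=\beta$ directly from the commutation identity. The $(2,1)$-entry becomes $\overline{c}c + (i_n/2)(c^*a - a^*c)$, and the parenthetical vanishes by the second lemma, leaving $|c|^2$; the $(1,2)$-entry becomes $\overline{d}d + (i_n/2)(d^*b - b^*d) = |d|^2$ by the same cancellation (here $\overline{d}d=|d|^2$ because $d$ lies in the Clifford group, where the reduced norm is scalar). The $(2,2)$-entry evaluates to $\overline{c}d - (i_n/2)(a^*d-c^*b)$, and a short computation using $\overline{(i_n/2)}=-i_n/2$, $\overline{\bar{d}c}=\bar{c}d$, and the commutation identity again shows this equals $\overline{\beta}$. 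The main obstacle is just the careful bookkeeping of the three involutions $'$, $*$, $\bar{\;\,}$ as $i_n$ is shuffled past entries of $g$; once the two identities above are in place, the verification is mechanical.
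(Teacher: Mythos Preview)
Your proposal is correct and follows essentially the same route as the paper: both invoke Proposition~\ref{L:image-of-sphere}, compute $g^{\dag}A_{\infty}g$ entry by entry, and then kill the extraneous $i_n/2$ terms in the diagonal entries using $c^*a=a^*c$ and $d^*b=b^*d$. The only cosmetic differences are that you make the commutation identity $\overline{x}\,i_n=i_n\,x^*$ explicit (the paper simply records the outcome of the multiplication) and that you extract the symmetry relations directly from the defining conditions $ac^{-1},bd^{-1}\in V_n$, whereas the paper appeals to \cite[Lemma~1]{Maclachlan1989} to see that $c^*a$ and $d^*b$ are Clifford vectors and hence $*$-fixed.
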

\begin{proof}
	By Proposition~\ref{L:image-of-sphere} we have $g^{-1}(S_{\infty}) = S_{g^{\dag}A_{\infty}g}$ where $A_{\infty}$ is as in equation \eqref{E:ainf}. 
	Here we compute
	$$A_1:=\begin{pmatrix}
	\beta_1 & \alpha_1 \\
	\gamma_1 & \overline{\beta}_1 
	\end{pmatrix} := \begin{pmatrix} \overline{d} & \overline{b} \\
	\overline{c} &\overline{a} \end{pmatrix} 
	\begin{pmatrix}
	-i_n/2 & -1 \\
	0 & i_n/2
	\end{pmatrix}
	\begin{pmatrix}
	a & b \\
	c & d 
	\end{pmatrix} = g^{\dag}A_{\infty}g$$
	and find that 
	$$\alpha_1 =\vert d \vert^2 + \frac{i_n}{2}(d^*b-b^*d), \quad \beta_1 = \overline{d}c + \frac{i_n}{2}(d^*a-b^*c), \quad \gamma_1=\vert c \vert^2 + \frac{i_n}{2}(c^*a-a^*c).$$
	By definition of $\PSL(\CC_n)$, we know that $ab^*, cd^* \in V^{n+1}$. From \cite[Lemma 1]{Maclachlan1989} this means that $c^*a$ and $d^*b$ are also Clifford vectors. 
	We now simplify $\gamma_1$. For $z \in V^{n+1}$, we note that $z^* = z$. Further, we have that $(a^*c)^* = c^*a$. It follows that $$\gamma_1 = \vert c \vert^2 + \frac{i_n}{2}(c^*a-(c^*a)^*),$$
	As  $c^*a \in V^{n+1}$, $(c^*a)^* = c^*a$, this yields $\gamma_1 =  \vert c \vert^2.$
	We use this same technique to simplify $\alpha_1$ and obtain that
	$$\alpha_1 =\vert d \vert^2, \quad \beta_1 = \overline{d}c + \frac{i_n}{2}(d^*a-b^*c), \quad
	\gamma_1=\vert c \vert^2.$$
\end{proof}

In order to prove integrality we need a proposition on the discriminant.
\begin{proposition}\label{lem:conjugation-preserves}
	Let $g \in \PSL_2(\CC_{n})$ and let $A = \begin{psmallmatrix} \beta & \alpha \\ \gamma & \overline{\beta} \end{psmallmatrix}$ be reverse Hermitian.
	\begin{enumerate}
		\item  We have $g^{\dag}Ag$ is also reverse Hermitian.
		\item $\delta(g^{\dag}Ag) = \delta(A)$. 
	\end{enumerate}
\end{proposition}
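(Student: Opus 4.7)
My plan for part (1) is to apply the anti-involution property of $\dagger$. Since $(BC)^\dagger = C^\dagger B^\dagger$ and $(\cdot)^{\dagger\dagger} = \mathrm{id}$ on $M_2(\CC_n)$, we have
\[
(g^\dagger A g)^\dagger = g^\dagger A^\dagger (g^\dagger)^\dagger = g^\dagger A g
\]
using $A^\dagger = A$ and $(g^\dagger)^\dagger = g$. This is essentially the computation already carried out in Proposition~\ref{lem:invariance}(1); the only additional content is to verify that the entries of $g^\dagger A g$ remain Clifford vectors, which follows from the defining $\SL_2(\CC_n)$-relations $ac^{-1}, bd^{-1} \in V_n$ combined with the spinor norm identity $\overline x\, x \in \RR$ for $x \in \CC_n^\times$.

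For part (2), my plan is direct computation. Writing $g = \begin{psmallmatrix} a & b \\ c & d \end{psmallmatrix}$ and expanding $g^\dagger A g$ as in the proof of Theorem~\ref{T:associated-A} gives
\[
\beta' = \overline d\,\beta\, a + \overline b\,\overline\beta\, c + \alpha\,\overline d\, c + \gamma\,\overline b\, a,
\]
\[
\alpha' = \alpha\,\vert d\vert^2 + \gamma\,\vert b\vert^2 + \overline d\,\beta\, b + \overline b\,\overline\beta\, d,
\]
\[
\gamma' = \alpha\,\vert c\vert^2 + \gamma\,\vert a\vert^2 + \overline c\,\beta\, a + \overline a\,\overline\beta\, c,
\]
where I use the centrality of $\RR$ in $\CC_n$ and the spinor norm identity $\overline x\, x = \vert x\vert^2 \in \RR$ for $x \in \CC_n^\times$. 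Expanding $\delta(g^\dagger A g) = \beta'\overline{\beta'} - \alpha'\gamma'$ and collecting terms by homogeneity in $(\alpha, \beta, \gamma)$: the $\vert\beta\vert^2$-homogeneous terms gather into $\vert\beta\vert^2$ times a Clifford scalar in $a,b,c,d$ that reduces to $1$ via the relation $ad^* - bc^* = 1$; the $\alpha\gamma$-homogeneous terms reduce to $-\alpha\gamma$; and the cross-terms mixing $\beta$ with $\alpha$ or $\gamma$ cancel pairwise. The result is $\delta(g^\dagger A g) = \vert\beta\vert^2 - \alpha\gamma = \delta(A)$.

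The main obstacle will be the bookkeeping: non-commutativity of $\CC_n$ for $n \ge 3$ together with the interplay of the involutions $\overline{(\cdot)}$, $(\cdot)^*$, and parity make sign and ordering errors easy, and the spinor norm identity applies only to elements of the Clifford group. A cleaner alternative, which I would prefer in the final write-up, is to exploit that $\SL_2(\CC_n)$ is generated by translations $T_v = \begin{psmallmatrix} 1 & v \\ 0 & 1 \end{psmallmatrix}$ for $v\in V_n$, scalings $D_u = \begin{psmallmatrix} u & 0 \\ 0 & (u^*)^{-1} \end{psmallmatrix}$ for $u \in \CC_n^\times$, and the inversion $J = \begin{psmallmatrix} 0 & -1 \\ 1 & 0 \end{psmallmatrix}$. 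Since $\delta$-invariance is closed under composition, it suffices to verify on these generating families. For $T_v$ one computes $\beta' = \beta + \overline v\,\gamma$, $\gamma'=\gamma$, and the contributions linear and quadratic in $v$ cancel using only centrality of $\RR$; for $J$ one directly finds $\beta'=-\overline\beta$, $\alpha'=\gamma$, $\gamma'=\alpha$, giving $\delta' = \overline\beta\,\beta - \gamma\alpha = \delta(A)$; for $D_u$ the identity reduces to multiplicativity of the spinor norm $\nrd(u)\nrd(u^{-1}) = 1$.
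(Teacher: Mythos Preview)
Your preferred alternative---checking $\delta$-invariance on the three generating families $T_v$, $D_u$, $J$ and inducting on word length---is exactly the route the paper takes (with the same generators, attributed there to Ahlfors). The paper also uses the explicit generator computations to verify by inspection that the entries of $g^\dagger A g$ are Clifford vectors, which handles the point you correctly flag in part~(1) that the abstract identity $(g^\dagger A g)^\dagger = g^\dagger A g$ alone does not settle.

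Your first ``direct'' plan is a genuine alternative in principle, but the sketch as written has a gap: the assertion that the $\beta$-quadratic terms in $\beta'\overline{\beta'}-\alpha'\gamma'$ collect into $\lvert\beta\rvert^2$ times a scalar in $a,b,c,d$ is not justified. Expanding $\beta'\overline{\beta'}$ produces cross-terms such as $\overline d\,\beta\,(a\overline c)\,\beta\, b$ in which the two copies of $\beta$ do not pair as $\beta\overline\beta$, and for general $\beta\in V_n$ and $v\in V_n$ the expression $\beta v\beta$ is not a scalar multiple of $v$. These terms only cancel against matching pieces of $\alpha'\gamma'$ after further manipulation using the vector conditions $a\overline c,\, b\overline d,\, c\overline a,\, d\overline b\in V_n$ and the pseudodeterminant relation, none of which you invoke. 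The computation can be pushed through, but it is substantially more delicate than your outline suggests; the generator approach you already favor avoids all of this.
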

\begin{proof}
	From Ahlfors, we have that $\PSL_2(\CC_{n})$ is generated by 
	$$X = \begin{pmatrix} a & 0 \\ 0 & a^{*-1} \end{pmatrix} , \quad Y=  \begin{pmatrix} 1 & b \\ 0 & 1 \end{pmatrix}, \quad Z =  \begin{pmatrix} 0 & 1 \\ -1 & 0 \end{pmatrix}$$
	where $a \in \Gamma_{n}$ and $b \in V^{n}$. 
	Hence every $g$ can be written as $g_1g_2\ldots g_m$ for some $m$ and $g_i$ each of the form above. 
	We will prove that statement by induction on $m \in \ZZ_{\geq 0}$. 
	The base case will be proved by checking that for each generator $g_0$ and each Clifford self-adjoint $A$ we have that $g_0^\dag A g_0$ is self-adjoint and that $\delta(g_0^{\dag}Ag_0)=\delta(A)$
	We will prove this at the end. 
	The inductive step follows since if $g=g_1g_2\ldots g_n=g_1 \widetilde{g}_1$ we have 
	$$ \delta( g^{\dag} A g ) = \delta(g_1^{\dag} (\widetilde{g}_1^{\dag} A \widetilde{g}_1) g_1) = \delta(\widetilde{g}_1^{\dag} A \widetilde{g}_1)= \delta(A).$$
	The first equality just factors the conjugation; the second equality uses that for $A$ Clifford self-adjoint and $\widetilde{g}_1 \in \SL_2(\CC_{n})$ the matrix $\widetilde{g}_1^{\dag} A \widetilde{g}_1$ remains self-adjoint together with the inductive hypothesis; the last equality follows from inductive hypothesis. 
	
	We now invariance of the discriminant under conjugation for each generator. 
	Using the formulas for matrix multiplication we find 
	$$X^{\dag} A X = \begin{pmatrix} \overline{a^{*-1}} \beta a & \overline{a^{*-1}} \alpha a^{*-1} \\
			\overline{a} \gamma a & \overline{a} \overline{\beta} a^{*-1}\end{pmatrix}, \quad Y^{\dag}AY = \begin{pmatrix} \beta+\gamma\overline{b} & (\beta + \gamma\overline{b})b+ \alpha + \overline{b}\overline{\beta} \\ \gamma & \gamma b + \overline{\beta}\end{pmatrix}, \quad Z^{\dag}AZ = \begin{pmatrix} -\overline{\beta} & \gamma \\ \alpha & -\beta \end{pmatrix},  $$
			all of which are reverse Hermitian by inspection.
			Also, $\det(X^{\dag}AX)=\beta\overline{\beta} - \alpha \gamma$ due to cancellation.
			One finds that
	$$\delta(X^{\dag}AX) = (a\beta a^{'-1})(a^{*-1}\overline{\beta}\overline{\alpha}) - (a\alpha\overline{a})(a^{*-1}\gamma a^{'-1}) = a\beta a^{'-1}a^{*-1}\overline{\beta}\overline{\alpha} - a \alpha \overline{a}a^{*-1}\gamma a^{'-1}.$$
As $\gamma, \alpha \in \RR$ and for $x \in \CC_n^{\times}$ we have that $x \overline{x} = x^*x^{'} = x^{'}x^* = \vert x \vert^2$. Thus, we obtain
		
		$$\delta(X^{\dag}AX) = \vert a^{-1} \vert^2 \vert a \vert^2 (\vert \beta \vert^2 - \alpha \gamma)= \vert a^{-1} a \vert^2 \delta(A) = \delta(A).$$

		which implies  
$$\delta(Y^{\dag}AY) = (\beta+\gamma\overline{b})(\gamma b + \overline{\beta}) - ((\beta + \gamma\overline{b})b+ \alpha + \overline{b}\overline{\beta})(\gamma)= \beta\gamma b + \vert \beta \vert^2 + \gamma\overline{b}\gamma b - \gamma\beta b - \gamma^2\overline{\beta}\beta - \gamma\alpha - \gamma\overline{b}\overline{\beta} = \delta(A).$$

		we have
		$$\delta(Z^{\dag}AZ) = - \overline{\beta}(-\beta) - \gamma\alpha = \delta(A).$$
	Thus the discriminant is preserved under the action. 
\end{proof}

We can now prove integrality of the Ford spheres.
 \begin{theorem}[Integrality]\label{T:integral}
	Let $g = \begin{psmallmatrix} a & b \\ c & d \end{psmallmatrix} \in \PSL_2(\CC_{n})$. The sphere $g^{-1}S_{\infty}$ has center $p$ and radius $r$ given by  
	$$p = \frac{1}{\vert c \vert^2}\left (-\overline{c}d - \frac{i_n}{2}(c^*b - a^*d)\right), \qquad r =\frac{1}{2\vert c\vert^2}.$$
	In particular if we take $g\in \PSL_2(O)$ we see that the curvature of the sphere is an integer.
\end{theorem}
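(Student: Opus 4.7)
The plan is to combine three results already established: Theorem~\ref{T:associated-A}, which presents $g^{-1}(S_{\infty})$ as a reverse Hermitian quadric $S_A$ with $A = \begin{psmallmatrix} \beta & |d|^2 \\ |c|^2 & \overline{\beta}\end{psmallmatrix}$ and $\beta = \overline{d}c + \frac{i_n}{2}(d^*a-b^*c)$; Theorem~\ref{L:r-and-p}, which reads center and radius directly off the matrix as $p = -\overline{\beta}/\gamma$ and $r = \sqrt{\delta(A)}/\gamma$; and Proposition~\ref{lem:conjugation-preserves}, which tells us the discriminant is preserved under $g \mapsto g^\dag A g$.

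For the radius I would first compute the discriminant of the distinguished matrix $A_{\infty}$ from \eqref{E:ainf}: since $\alpha=1$, $\gamma=0$, and $\beta=i_n/2$, one gets $\delta(A_{\infty})=|i_n/2|^2 - 1\cdot 0 = 1/4$. Writing $A = g^{\dag}A_{\infty} g$ as in the proof of Theorem~\ref{T:associated-A} and applying Proposition~\ref{lem:conjugation-preserves} yields $\delta(A) = 1/4$, so $r = (1/2)/|c|^2 = 1/(2|c|^2)$ as claimed.

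For the center I need to compute $\overline{\beta}$ explicitly and show it equals $\overline{c}d + \frac{i_n}{2}(c^*b - a^*d)$, after which $p = -\overline{\beta}/|c|^2$ gives the stated formula. The first term is immediate: $\overline{\overline{d}c} = \overline{c}\,\overline{\overline{d}} = \overline{c}d$. For the second term I would use three identities for the Clifford involutions: (i) Clifford conjugation is an antiautomorphism, so $\overline{xy}=\overline{y}\,\overline{x}$; (ii) $\overline{x^*} = x'$ (parity), which follows from $\overline{x} = (x')^* = (x^*)'$; and (iii) since $i_n$ anticommutes with each of $i_1,\ldots,i_{n-1}$ inside $\CC_{n+1}$, the identity $i_n y = y' i_n$ holds for every $y\in \CC_n$. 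Combining these, both $\overline{\frac{i_n}{2}(d^*a-b^*c)}$ and $\frac{i_n}{2}(c^*b-a^*d)$ reduce to the common expression $-\frac{1}{2}(\overline{a}d' - \overline{c}b')\,i_n$, so they are equal. This is the step I expect to be the main obstacle: it is not mathematically deep but it requires careful bookkeeping of the three involutions $x\mapsto x'$, $x\mapsto x^*$, $x\mapsto \overline{x}$ and of the sign produced when $i_n$ is moved across an element of $\CC_n$.

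Finally, for the integrality statement, specialize to $g\in\PSL_2(O)$. Then $c\in O$, and since $O$ is an order in the rational Clifford algebra $K$ associated to a positive definite integral primitive quadratic form, the reduced norm $|c|^2 = c\overline{c}$ takes values in $\ZZ_{\geq 0}$ on $O$. Hence the curvature $1/r = 2|c|^2 \in \ZZ$, which is precisely integrality of the Ford sphere $g^{-1}(S_{\infty})$. This completes the proof and also finishes the integrality half of Theorem~\ref{T:main}.
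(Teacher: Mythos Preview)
Your proof is correct and follows essentially the same route as the paper: combine Theorem~\ref{T:associated-A} with Theorem~\ref{L:r-and-p}, then invoke the discriminant invariance of Proposition~\ref{lem:conjugation-preserves} to get $\delta(A)=\delta(A_\infty)=1/4$ and hence $r=1/(2|c|^2)$. Your explicit verification that $\overline{\beta}=\overline{c}d+\tfrac{i_n}{2}(c^*b-a^*d)$ via the identities $\overline{x^*}=x'$ and $i_n y=y'i_n$ for $y\in\CC_n$, and your justification that $|c|^2=c\overline{c}\in O\cap\QQ=\ZZ$, are details the paper leaves implicit but which you have handled correctly.
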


\begin{proof}
	From Proposition~\ref{L:r-and-p} and Theorem~\ref{T:associated-A}, $A_1 = A_{\infty}^g$ we have that the radius of $S_{A_{\infty}^g}$ is given by 
	$$r = \frac{\sqrt{\vert \beta_1 \vert^2 - \alpha_1\gamma_1}}{\gamma_1}.$$
	
	As the discriminant of a reverse Hermitian matrix is invariant under our conjugation action (Proposition~\ref{lem:conjugation-preserves}), we see 
	$$\delta(A^g) = \vert \beta_1 \vert^2 - \alpha_1\gamma_1 = \delta(A_{\infty} )= 1/4.$$
	Thus, $r = 1/2\vert c \vert^2$
	We similarly obtain the center, $p$, by substituting our computed $\overline{\beta_1}$ and $\gamma_1$ values from Theorem~\ref{T:associated-A}. 
\end{proof}

\begin{remark}
In the arxiv manuscript \cite{Kisil2015} proposes a formula for the radii of $g(S_{\infty})$, which appears to be off by a factor of 2.  At the time of writing, we have been unable to thoroughly compare our calculation with Kisil’s due to unclear (sometimes incorrect) definitions and insufficient detail in \cite{Kisil2015}.
\end{remark}

\subsection{Internal Disjointness}\label{S:internal-disjointness}
We now show that the intersection of the interior of two spheres in $\mathcal{P}$ is empty.
\begin{proposition}
	Every $n$-sphere tangent to $V_n \subset \partial\Hcal^{n+1}$ is determined by its point of tangency and one other point. 
\end{proposition}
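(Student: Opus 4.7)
The plan is to exploit the rigidity that tangency to the flat hyperplane $V_n$ imposes on the center of such a sphere. Let $S$ be an $n$-sphere inside $\overline{\Hcal}^{n+1}$ tangent to $V_n$ at a point $p$. Since the tangent hyperplane to a round sphere at a point is perpendicular to the radius at that point, and the unit normal to $V_n$ inside $V_{n+1}$ is $i_n$, the center of $S$ must lie on the line $p + \RR i_n$. The constraint $S \subset \overline{\Hcal}^{n+1}$ then forces the center to be $p + t i_n$ for some $t > 0$, and the radius automatically equals $r = t$. This already exhibits the collection of $n$-spheres tangent to $V_n$ at a fixed point $p$ as a one-parameter family indexed by $t > 0$, so one additional datum should suffice to pin it down.

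To carry this out, I would impose the hypothesis that a second point $q \in S$, with $q \neq p$, lies on the sphere. The equation $|q - (p + t i_n)|^2 = t^2$ expands to
\[ |q - p|^2 - 2 t\, (q - p)_n + t^2 = t^2, \]
where $(q-p)_n$ denotes the $i_n$-coefficient of $q - p$ in the standard basis $1, i_1, \ldots, i_n$ of $V_{n+1}$. The $t^2$ terms cancel, leaving a linear equation whose unique solution is
\[ t = \frac{|q-p|^2}{2\,(q-p)_n}. \]
This determines the center $p + t i_n$ and the radius $t$, and hence $S$, uniquely.

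The only possible obstacle is the vanishing of the denominator $(q-p)_n$. But $(q-p)_n = 0$ would mean $q$ has the same $i_n$-coordinate as $p$, i.e., $q \in V_n$. Since $S$ is tangent to $V_n$ at $p$ alone, the intersection $S \cap V_n$ is the single point $\{p\}$, so any $q \in S$ with $q \neq p$ automatically satisfies $q \notin V_n$ and $(q-p)_n \neq 0$. Thus the expression for $t$ is always well-defined, and the sphere is uniquely determined by $p$ and $q$.
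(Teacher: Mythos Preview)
Your proof is correct and follows essentially the same approach as the paper: both recognize that the spheres tangent to $V_n$ at a fixed point $p$ form a one-parameter family (indexed by the radius), so a single additional point determines the sphere. The paper phrases this informally as ``inflate the radius until one reaches the point $q$,'' whereas you carry out the computation explicitly, solving $|q-(p+t i_n)|^2=t^2$ for $t$ and checking that the denominator $(q-p)_n$ cannot vanish; your version is more detailed but the underlying idea is identical.
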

\begin{proof}
Let $S$ be a sphere containing points $p$ and $q$ with $p$ the point of tangency.  
One can then inflate the radius of a sphere tangent to $p$ until one reaches the point $q$. 
This determines the radius. 
The radius together with the point of tangency uniquely determine the sphere.
\end{proof}

Let $\Gamma = \PSL_2(O)$ with $O = \quat{-d_1,\ldots,-d_{n-1}}{\ZZ}$. 
Let $K = O\otimes_{\ZZ}\QQ$ and view $\Vec(K) \cup \lbrace \infty \rbrace$ as the boundary of $\Hcal^{n+1,*}$.

\begin{proposition}\label{L:preservation-of-points-preservation-of-spheres}
	Let $S$ be a Ford sphere tangent to the boundary at $a \in \Vec(K)$. 
	Let $\gamma \in \Gamma$.
	If $\gamma(a) = a$ then $\gamma(S)=S$.
\end{proposition}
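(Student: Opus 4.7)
The plan is to reduce the statement to showing that the stabilizer $\Gamma_\infty = \Stab_\Gamma(\infty)$ preserves the horosphere $S_\infty$, which is a transparent computation in upper triangular matrices. Since $S$ is a Ford sphere tangent at $a$, by definition there exists $\gamma_0 \in \Gamma$ with $\gamma_0(\infty) = a$ and $S = \gamma_0(S_\infty)$. Set $\gamma' := \gamma_0^{-1}\gamma\gamma_0$. The hypothesis $\gamma(a)=a$ gives
\[
\gamma'(\infty) = \gamma_0^{-1}\gamma\gamma_0(\infty) = \gamma_0^{-1}\gamma(a) = \gamma_0^{-1}(a) = \infty,
\]
so $\gamma' \in \Gamma_\infty$. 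If we can show $\gamma'(S_\infty) = S_\infty$, then $\gamma(S) = \gamma\gamma_0(S_\infty) = \gamma_0\gamma'(S_\infty) = \gamma_0(S_\infty) = S$, which is what we want.

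The crux is therefore to verify that every element of $\Gamma_\infty$ preserves $S_\infty = V_n + i_n$ setwise. An element of $\SL_2(O)$ fixing $\infty$ must have the form $g = \begin{psmallmatrix} a & b \\ 0 & d \end{psmallmatrix}$ with $ad^* = 1$ (from the $\SL_2$ condition with $c=0$), acting by $x \mapsto (ax+b)d^{-1}$. Since $g$ and $g^{-1}$ both lie in $M_2(O)$, both $d$ and $d^{-1}$ lie in $O$; because $O$ comes from a positive definite primitive quadratic form, its unit group is finite and $|d|^2 = \nrd(d) = 1$ (likewise $|a|=1$). This matches the discussion around $O^\times$ as a finite group in \S\ref{S:clifford-algebras}.

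Next I would invoke the standard Clifford-M\"obius transformation rule for the top coordinate, namely that for any $g = \begin{psmallmatrix} a & b \\ c & d \end{psmallmatrix}$ the $i_n$-component of $g(x)$ equals $x_n/|cx+d|^2$ (this is proved in the references \cite{Ratcliffe2019} and \cite[\S5]{Dupuy2024} used throughout the paper). Specializing to $c=0$ and $|d|=1$ shows that $g$ preserves the height function $x \mapsto x_n$ exactly, so the horosphere $\{x_n = 1\} = S_\infty$ is mapped to itself. This completes Step 3 and thus the proof.

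The only potentially delicate point is Step 3: one must know that the $\SL_2(O)$-arithmetic forces $|d|=1$, rather than allowing a genuine dilation $x \mapsto x/|d|^2$ of heights. This is exactly where positive definiteness of the quadratic form underlying $O$ enters; without it, $\Gamma_\infty$ could contain elements that shift $S_\infty$ to another horosphere based at $\infty$, and the proof would fail. Everything else is formal manipulation of the action $\gamma \mapsto \gamma_0^{-1}\gamma\gamma_0$ and the definition of a Ford sphere as $\gamma_0(S_\infty)$.
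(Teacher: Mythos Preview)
Your proof is correct and follows essentially the same approach as the paper: both reduce via conjugation to showing that $\Gamma_\infty$ preserves $S_\infty$. The only minor difference is in the verification step---the paper uses the description of $\Gamma_\infty$ as generated by translations $\tau_s$ and rotations $\pi_t$ (with $t\in O^\times$) and checks each generator, whereas you work with a general upper-triangular element and invoke the height formula $x_n \mapsto x_n/|cx+d|^2$ together with $|d|=1$; both arguments hinge on the same fact that units in $O$ have norm $1$ by positive definiteness.
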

\begin{proof}
	We know that $S = \alpha(S_{\infty})$ for some $\alpha \in \Gamma$ and hence $\alpha(\infty)=a$.
	We know that $\Stab_{\Gamma}(a) = \alpha\Gamma_{\infty} \alpha^{-1}$ and $\Gamma_{\infty}$ is the generated by  translations and rotations.
	Showing now that $\gamma(S) = S$ for $\gamma\in \Stab_{\Gamma}(a)$  reduces to showing that $S_{\infty} = \partial \Hcal^{n+1} + i_n$ is invariant under translations $\tau_s$ and rotations $\pi_t$ for $s \in\Vec(O)$ and $t\in O^{\times}$ the Clifford group.
	The statement for translations is evident. 
	Similarly, $\pi_t(x) = tx((t^*)^{-1})^{-1}= txt^*$ which is an orthogonal transformation (see \cite[Lemma 10, v]{Waterman1993} or \cite{Dupuy2024}) on $V_n$ and preserves $i_n$.
\end{proof}

\begin{theorem}
	Every Ford sphere $S$ is uniquely determined by its point of tangency $a\in\Vec(K) \cup \lbrace \infty \rbrace$.
\end{theorem}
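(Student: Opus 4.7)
The plan is to reduce this uniqueness claim directly to Proposition~\ref{L:preservation-of-points-preservation-of-spheres}, which already tells us that elements of $\Gamma$ fixing the tangency point preserve the whole sphere. The key observation is that any two Ford spheres tangent at the same point differ by a stabilizer element, so the previous proposition forces them to coincide.

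Concretely, suppose $S$ and $S'$ are Ford spheres both tangent to the boundary at $a \in \Vec(K) \cup \{\infty\}$. By definition of a Ford sphere, there exist $\alpha, \alpha' \in \Gamma$ with $S = \alpha(S_\infty)$ and $S' = \alpha'(S_\infty)$. Since $S_\infty$ is tangent to the boundary at $\infty$ and M\"obius transformations send points of tangency to points of tangency, we have $\alpha(\infty) = a = \alpha'(\infty)$. Setting $\gamma := \alpha'\alpha^{-1} \in \Gamma$, we compute $\gamma(a) = \alpha'\alpha^{-1}(a) = \alpha'(\infty) = a$, so $\gamma$ fixes the tangency point of $S$. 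Proposition~\ref{L:preservation-of-points-preservation-of-spheres} then gives $\gamma(S) = S$, i.e.\ $\alpha'(S_\infty) = \alpha(S_\infty)$, which is precisely $S = S'$.

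The main (and essentially only) conceptual ingredient is the invariance statement already established for $S_\infty$ under $\Gamma_\infty$; once that is in hand, the argument above is purely formal. The only mild subtlety is the case $a = \infty$, but this is handled by the same display (with $\alpha = \alpha' = \mathrm{id}$ absorbed into the stabilizer $\Gamma_\infty$, whose preservation of $S_\infty$ is the content of the cited proposition). No obstacle is expected.
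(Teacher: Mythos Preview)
Your proof is correct and essentially identical to the paper's: both find $\gamma\in\Gamma$ with $\gamma(S)=S'$, observe that $\gamma$ must fix the common tangency point $a$, and invoke Proposition~\ref{L:preservation-of-points-preservation-of-spheres} to conclude $S=S'$. The only cosmetic difference is that you construct $\gamma=\alpha'\alpha^{-1}$ explicitly from the defining data, whereas the paper appeals to transitivity of the $\Gamma$-action on Ford spheres to assert such a $\gamma$ exists; these are the same statement.
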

\begin{proof}
	Suppose that $S$ and $S'$ are both Ford spheres tangent to $\partial \Hcal^{n+1}$ at $a \in K$. 
By transitivity of the action of $\Gamma$ on the set of Ford spheres there exists some $\gamma \in \Gamma$ such that $\gamma(S) = S'$. 
But since $\gamma$ preserves $\partial \Hcal^{n+1}$ we must have $\gamma(a) = a$. 
By the Proposition~\ref{L:preservation-of-points-preservation-of-spheres} we see that $\gamma(S)=S$. This proves $S = S'$.
\end{proof}

\begin{theorem}
	For $x \in \partial \Hcal^{n+1}$ principal, the sphere $S_x$ is invariant under $\Stab_{\Gamma}(x)$. 
\end{theorem}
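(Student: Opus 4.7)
The plan is to reduce the statement to the case $x = \infty$, which is essentially already handled inside the proof of Proposition~\ref{L:preservation-of-points-preservation-of-spheres}, and then transport the conclusion to a general principal $x$ via conjugation. The bookkeeping is slightly different depending on whether $x \in \Vec(K)$ or $x = \infty$, so I would split into those two cases, but only the first requires any work.

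First I would dispatch the case $x = \infty$: by construction $S_\infty = V_n + i_n$, and $\Stab_\Gamma(\infty)$ is generated by translations $\tau_s\colon x \mapsto x+s$ for $s \in \Vec(O)$ and rotations $\pi_t\colon x \mapsto t x t^*$ for $t$ in the Clifford group $O^\times$. Translations clearly preserve the affine hyperplane $V_n + i_n$, and since $\pi_t$ is an orthogonal transformation of $V_{n+1}$ that fixes $i_n$ (cf.\ \cite[Lemma 10(v)]{Waterman1993}, as invoked in the proof of Proposition~\ref{L:preservation-of-points-preservation-of-spheres}), it also preserves $S_\infty$. Hence $\gamma(S_\infty) = S_\infty$ for every $\gamma \in \Stab_\Gamma(\infty)$.

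Next I would handle the case $x \in \Vec(K)$. Since $x$ is principal, choose $\alpha \in \Gamma$ with $\alpha(\infty) = x$, so that $S_x = \alpha(S_\infty)$ by definition. The stabilizer satisfies
$$
\Stab_\Gamma(x) = \alpha\, \Stab_\Gamma(\infty)\, \alpha^{-1}.
$$
Given $\gamma \in \Stab_\Gamma(x)$, write $\gamma = \alpha \gamma_\infty \alpha^{-1}$ with $\gamma_\infty \in \Stab_\Gamma(\infty)$. By the first case $\gamma_\infty(S_\infty) = S_\infty$, and therefore
$$
\gamma(S_x) = \alpha \gamma_\infty \alpha^{-1}\bigl(\alpha(S_\infty)\bigr) = \alpha\bigl(\gamma_\infty(S_\infty)\bigr) = \alpha(S_\infty) = S_x.
$$
Alternatively, this second case is exactly Proposition~\ref{L:preservation-of-points-preservation-of-spheres} quoted verbatim, so I could just cite it.

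I do not expect any serious obstacle: the only subtle point is making sure that $S_x$ is well defined independently of the choice of $\alpha$, but this is precisely the content of the previous theorem asserting that a Ford sphere is uniquely determined by its point of tangency. Given that, the argument is a one-line conjugation trick on top of the already-established invariance of $S_\infty$ under translations and Clifford rotations.
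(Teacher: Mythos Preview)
Your proposal is correct and follows essentially the same reduction-to-$\infty$ strategy as the paper. The only difference is that for the key fact that $\pi_t$ (your notation; the paper writes $\sigma_t$) fixes $i_n$, you cite the assertion made in the proof of Proposition~\ref{L:preservation-of-points-preservation-of-spheres}, whereas the paper's proof of this theorem actually supplies the missing computation: writing $t = t_1\cdots t_m$ with $t_j \in V_n$, one gets $t i_n t^* = t_1\cdots t_m\,\overline{t}_m\cdots\overline{t}_1\, i_n = |t|^2 i_n$, and $|t|^2 = 1$ since $t \in O^\times$. So your argument is logically fine but leans on a claim that the paper only fully justifies here.
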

\begin{proof}
	By pulling back to $x=\infty$ is suffices to prove this for this in the case when $x=\infty$ since all of the other stabilizers are conjugate to this one. 
	In this situation one has $S_{\infty} = i_n + V_n$. 
	By the description of the stabilizer of $\infty$  it suffices to show that $\tau_s$ and $\sigma_t$ stabilize $S_{\infty}$. 
	Clearly $\tau_s$ stabilizes $S_{\infty}$ as it is a translation along $V_n$.
	For $i_n + b \in i_n + V_n$ one has $\sigma_t(i_n+b) = ti_nt^*+tbt^*$. 
	From the Spin representation we know that $b$ is preserved. 
	It suffices to prove this for every Clifford group element $t \in O^*$.
	Viewing $O^* \subset \CC_n^*$ and using that every element of $\CC_n^*$ can be written as a product of Clifford vectors we can write $t =t_1\cdots t_m$ where $t_j \in V_n$.  
	One has $t i_n t^* = t_1 \cdots t_m i_n t_m \cdots t_1 = t_1\cdots t_m \overline{t}_m \cdots \overline{t}_1 i_n = \vert t \vert^2 i_n$. 
	Since $t \in O$ this must be a positive integer and since it is invertible we have $\vert t \vert^2=1$. 
\end{proof}

\begin{theorem}[Internal Disjointness]
Every two distinct spheres $S$ and $T$ in the Ford packing $\mathcal{P}$ have disjoint interiors.
\end{theorem}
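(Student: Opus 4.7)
The plan is to reduce to the case $S = S_{\infty}$ by transitivity of $\Gamma = \PSL_2(O)$ on $\mathcal{P}$, and then argue that every other Ford sphere lies strictly below the horizontal slice $\{x_n = 1\}$ that forms $S_{\infty}$.

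First, since $\mathcal{P} = \orb_{\Gamma}(S_{\infty})$ by construction, there exists $g \in \Gamma$ with $g(S) = S_{\infty}$; because the associated M\"obius transformation is a hyperbolic isometry carrying horospheres to horospheres and open horoballs to open horoballs, the interiors of $S$ and $T$ are disjoint if and only if those of $S_{\infty}$ and $g(T)$ are. Replacing $T$ by $g(T)$, it thus suffices to show that for every $T \in \mathcal{P}$ with $T \neq S_{\infty}$, the open horoballs bounded by $S_{\infty}$ and $T$ are disjoint.

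Second, the open horoball bounded by $S_{\infty} = V_n + i_n$ is the explicit region $H_{\infty} = \{x \in \Hcal^{n+1} : x_n > 1\}$. Writing $T = h(S_{\infty})$ for some $h \in \Gamma$, the condition $T \neq S_{\infty}$ is equivalent to $h \notin \Stab_{\Gamma}(\infty)$, which forces the $(2,1)$-entry $c$ of $h^{-1}$ to be a nonzero element of $O$. Applying Theorem~\ref{T:integral} to $h^{-1}$, the sphere $T$ has radius $r = 1/(2|c|^2)$ and is tangent to $\partial \Hcal^{n+1}$ at a point of $V_n$, so the open Euclidean ball it bounds is contained in the slab $\{0 < x_n < 2r\}$.

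Third, I need the lower bound $|c|^2 \geq 1$ for every nonzero $c \in O$. This is where the standing hypothesis that $O$ is an order inside the rational Clifford algebra of a positive definite, integral, primitive quadratic form intervenes: under this assumption the reduced norm $\nrd(c) = c\overline{c} = |c|^2$ sends $O$ into $\ZZ$ and is strictly positive on nonzero elements, hence takes value at least $1$. Consequently $r \leq 1/2$, so the open horoball bounded by $T$ lies in $\{x_n < 1\}$, which is disjoint from $H_{\infty}$. The main obstacle is the arithmetic integrality bound $\nrd(c) \in \ZZ_{\geq 1}$ for nonzero $c \in O$; once this input is recorded, the remainder is a short Euclidean height comparison combined with the transitivity of $\Gamma$ on $\mathcal{P}$.
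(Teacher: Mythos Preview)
Your argument is correct and takes a genuinely more direct route than the paper's. The paper proves internal disjointness by first inscribing $S_{\infty}$ inside the closure of the bubble domain $\overline{B}$ and then invoking the tiling of $\Hcal^{n+1}$ by $\Gamma$-translates of the fundamental domain $D$: since $S_{\infty} \subset \overline{B}$ and $S_x \subset \gamma(\overline{B})$ with $\gamma \notin \Gamma_{\infty}$, the two can meet only along $\partial D$. You bypass the Bianchi--Humbert fundamental-domain machinery entirely, instead feeding the explicit radius formula of Theorem~\ref{T:integral} into a one-line Euclidean height comparison.

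Both arguments rest on the same arithmetic input, namely $\vert c\vert^2 \geq 1$ for every nonzero $c \in O^{\mon}$ arising as a lower-left entry in $\SL_2(O)$. The paper leaves this implicit in the geometric assertion ``$S_{\infty}$ is inscribed in $\overline{B}$'' (which, unwinding Definition~\ref{D:bubble}, is exactly the statement $1/\vert\mu\vert \leq 1$ for all nonzero $\mu$ in unimodular pairs), whereas you isolate it explicitly as the lemma $\nrd(c)=c\overline{c}\in\ZZ_{\geq 1}$. Your route is shorter and does not depend on the description of $D$ from \cite{Dupuy2024}; the paper's route, by contrast, makes the tiling picture of Figure~\ref{F:h2-spheres} the organizing principle and situates the result within the broader fundamental-domain framework. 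One small caution: your phrasing ``$\nrd$ sends $O$ into $\ZZ$'' is only meaningful on $O^{\mon}$, where $c\overline{c}$ is scalar, and the integrality uses that $O$ is closed under Clifford conjugation so that $c\overline{c}\in O\cap\RR=\ZZ$; this holds for the $*$-stable orders of the paper but is worth saying, since Theorem~\ref{T:integral} alone only gives $2\vert c\vert^2\in\ZZ$, which would not suffice.
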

\begin{proof}
The idea of the proof can be seen in Figure~\ref{F:h2-spheres}.
There the sphere $S_{\infty}$ is inscribed in $B$ with point of tangency only at $\partial B$. 
Consider two spheres $S$ and $T$. 
Again, without loss of generality we can suppose $S=S_{\infty}$ and $T=S_x=\gamma(S_{\infty})$ where $x=\gamma(\infty)$.
There are three cases. 
The first case is that $\gamma \in \Gamma_{\infty}$ in which case $S=T$ which is not considered in the proposition.
If $x\neq \infty$ we can translate $x \in \partial \Hcal^{n+1} = V_n$ back to the fundamental domain $F\subset V_n$ for the additive group $\Vec(O)$ and suppose that $x \in F$.
We now have $S_x\cap S_{\infty} \subset \gamma(\overline{B}) \cap \overline{D}$.
Since $\overline{B}$ tile the plane and $\overline{B} \neq \gamma(\overline{B})$ we must have that $\gamma(\overline{B}) \cap \overline{D}  \subset \partial D$. 
This establishes that the spheres have disjoint interiors.
\end{proof}

\subsection{Connectivity and Adjacency}\label{S:connectivity}

Let $\mathcal{P}$ be the collection of Ford spheres in $\Hcal^{n+1}$ for $\Gamma = \PSL_2(O)$ for some order $O$ in $K = \quat{-d_1,\ldots,-d_{n-1}}{\QQ} \subset \CC_n$. 
\begin{definition}
We define the \emph{tangency graph} of $\mathcal{P}$ by declaring the vertices to be $\mathcal{P}$ and declaring that spheres $S$ and $T$ are adjacent if and only if $S \cap T \neq \emptyset$.
\end{definition}

\begin{definition}
	We say that packing $\mathcal{P}$ is \emph{connected} if and only if the tangency graph is connected.
\end{definition} 

We will make use of the definition of Clifford-Euclideanity. 
\begin{definition}
Let $O$ be an order in a rational Clifford algebra $K$ associated to a positive definite integral quadratic form.
We say that $O$ is \emph{Clifford-Euclidean} if and only if for all $c,d\in O^{\mon}$ with $c^{-1} d \in K$ invertible there exists some $a\in \Vec(O)$ such that $d=ca+r$ and  $\vert r \vert<\vert c\vert.$
\end{definition}
The basic properties and algorithms regarding Clifford-Euclidean orders are summarized in \cite[\S 3.5]{Dupuy2024}.

\begin{theorem}[Connectivity]\label{T:connectivity}
Let $O$ be an order in $\CC_n$ which is Clifford-Euclidean with respect to the norm. 
The Ford spheres $\mathcal{P}$ for $\PSL_2(O)$ are connected. 
\end{theorem}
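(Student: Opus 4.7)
The plan is to adapt the $\Hcal^2$ proof of Theorem~\ref{T:connected-sl2}, replacing the Euclidean algorithm on $\ZZ$ with the Clifford-Euclidean division algorithm. By Theorem~\ref{T:integral}, every Ford sphere $S = g(S_\infty)$ with $g = \begin{psmallmatrix} a & b \\ c & d \end{psmallmatrix} \in \SL_2(O)$ and $c \neq 0$ has curvature $2|c|^2$, a positive integer. It therefore suffices to show that every Ford sphere $S \neq S_\infty$ is tangent to some Ford sphere $S'$ of strictly smaller curvature. Iterating produces a finite chain of tangencies from $S$ to $S_\infty$, and transitivity then establishes that the tangency graph is connected.

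Given $S = g(S_\infty)$ with $c \neq 0$, I would search for $S'$ in the form $gh(S_\infty)$ for a suitable $h \in \SL_2(O)$. Such an $S'$ is automatically a Ford sphere, and is tangent to $S$ precisely when $h(S_\infty)$ is tangent to $S_\infty$. By Theorem~\ref{T:integral}, $h(S_\infty)$ has radius $1/(2|u|^2)$ where $u$ is the bottom-left entry of $h$, and since $S_\infty = V_n + i_n$ is the horosphere of radius $1/2$ at $\infty$, tangency is equivalent to $|u| = 1$. The simplest candidate satisfying this is
$$ h = \begin{pmatrix} q & -1 \\ 1 & 0 \end{pmatrix}, \qquad q \in \Vec(O),$$
which one checks lies in $\SL_2(O)$ for any Clifford vector $q$. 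Direct multiplication gives
$$ gh = \begin{pmatrix} aq+b & -a \\ cq+d & -c \end{pmatrix},$$
so the curvature of $S'$ is $2|cq+d|^2$ and the point of tangency between $S$ and $S'$ is $g(q+i_n)$.

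The task thus reduces to choosing $q \in \Vec(O)$ with $|cq+d| < |c|$. Because $(c,d)$ is the bottom row of an element of $\SL_2(O)$, it is right-unimodular; in particular $c, d \in O^{\mon}$ and $c^{-1}d \in \Vec(K)$. Applying the Clifford-Euclidean property to the pair $(c,-d)$ produces $q \in \Vec(O)$ and $r \in O^{\mon}$ with $-d = cq + r$ and $|r|<|c|$, whence $|cq+d| = |r| < |c|$. The descent strictly decreases the integer curvature and so terminates; once $|c|$ reaches $1$ the division step forces $r = 0$ (since $1$ is the minimal positive value of $|\cdot|$ on $O^\mon$ when $1 \in O$), so $cq+d = 0$, $gh \in \Stab_\Gamma(\infty)$, and the terminal sphere in the chain is $S_\infty$.

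The most delicate point is the applicability of Clifford-Euclideanity throughout the descent: the hypothesis requires $c^{-1}d$ to be invertible in $K$, which can fail when $d$ is zero or a zero-divisor. These corner cases can be handled separately. If $d = 0$, then right-unimodularity of $(c,0)$ forces $c \in O^\times$, so $S$ already has radius $1/2$ and is tangent to $S_\infty$ with no further work. If $d$ is a nonzero zero-divisor, one replaces $g$ by $g\tau$ for an appropriate $\tau \in \Stab_\Gamma(\infty)$ (which preserves $S$) to obtain a representative with invertible bottom-right entry before invoking the division algorithm. Verifying that such a replacement is always available is the main technical obstacle.
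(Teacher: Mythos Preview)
Your approach is essentially the same as the paper's: both argue by induction on the integer curvature, producing for each $S=g(S_\infty)$ a tangent Ford sphere of strictly larger radius by applying $g$ to a sphere $S_q$ (your $h(S_\infty)$, the paper's $S_\omega$) tangent to $S_\infty$, with $q\in\Vec(O)$ chosen via the Clifford--Euclidean division so that $|cq+d|<|c|$. Your write-up is in fact slightly more explicit (via the matrix $h=\begin{psmallmatrix} q & -1\\ 1 & 0\end{psmallmatrix}$) and more careful about the invertibility hypothesis on $c^{-1}d$ in the Clifford--Euclidean definition, a point the paper's proof passes over silently.
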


\begin{proof}
We claim that for every $S \in \mathcal{P}$ there is a finite walk on the tangency graph to $S_{\infty}$.  
The proof is by induction on the curvature of $S$ (which is an integer) where we define the curvature of $S_{\infty}$ to be zero. 
This is the base case where there is a walk of length zero.

By induction it is enough to show that for any Ford sphere $S_{\alpha}$, there exists some element $\gamma \in\PSL_{2}(\mathcal{O})$ that takes $S_{\alpha}$ to an adjacent Ford sphere $\gamma(S_{\alpha})$ and $\gamma(S_{\alpha})$ has a larger radius.

Consider a Ford sphere $S_{\alpha}$. 
By the definition of a Ford sphere, $S_{\alpha}=g(S_{\infty})$ for some $\gamma\in \Gamma=\PSL_{2}(O)$. 
Now consider some other Ford sphere $S_{\omega}$ which is of radius $1/2$ and is tangent to $S_{\infty}$. 
Now if we take the same $g$ as above and consider $S_{\beta}=g(S_{\omega})$ we find that $S_{\beta}$ is tangent to $S_{\alpha}$, since $S_{\omega}$ was tangent to $S_{\infty}$.

We claim that we can choose $\omega \in \Vec(O)$ so that the radius of $S_{\beta}$ is greater than that of $S_{\alpha}$. 
If $S_{\alpha} = g(S_{\infty})$ with $g=\begin{pmatrix} a & b \\ c & d\end{pmatrix} \in \Gamma$ then $\alpha = ac^{-1}$. 
Now applying this same $g$ to $\omega$ we have that $S_{\beta}=g(S_{\omega})= S_{g(\omega)}$ and hence it has radius $1/(2\vert c\omega+d\vert^{2})$. 
By the Clifford-Euclidean property of $O$ there exists some $\omega\in \Vec(O)$ such that $\vert c^{-1} d + \omega\vert<1$ so that $\vert c \omega + d \vert < \vert c \vert$ which implies $r(\beta)>r(\alpha)$. 
\end{proof}

At the time of writing this it is unclear if there are infinitely many orders $O$ which are Clifford-Euclidean. 
See \cite[\S 17.1]{Dupuy2024} for this and a collection of open problems related to this.

\subsection{Diophantine Approximation}
Just as in Ford's original article \cite{Ford1918} there is a very simple proof of Dirichlet's Theorem.
\begin{theorem}
	Let $O$ be an order in $\CC_n$ which is Clifford-Euclidean with respect to the reduced norm.
	Let $K$ be the rational Clifford algebra containing $O$.  
	For every $\alpha \in V_n\setminus \Vec(K)$ there are infinitely many $(c,d)$ unimodular such that 
	 $$ \left \vert \alpha - c^{-1}d  \right \vert < \frac{1}{2\vert c \vert^2}. $$ 
\end{theorem}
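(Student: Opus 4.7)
The plan is to follow Ford's classical geometric argument. By Theorem~\ref{T:integral}, for any unimodular pair $(c,d)$ the Ford sphere $S_{c^{-1}d}$ has radius $r = 1/(2|c|^2)$, and its orthogonal projection onto $V_n$ is the closed ball of radius $r$ centered at $c^{-1}d$. Consequently, the Diophantine inequality $|\alpha - c^{-1}d| < 1/(2|c|^2)$ is equivalent to the geometric statement that the vertical ray $\ell_\alpha := \{\alpha + t i_n : t > 0\}$ pierces the Ford sphere $S_{c^{-1}d}$. The theorem thus reduces to exhibiting infinitely many Ford spheres in $\mathcal{P}$ that $\ell_\alpha$ crosses.

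First I would set up the reduction cleanly: because $\alpha \in V_n$ and $c^{-1}d \in \Vec(K) \subset V_n$, the difference $\alpha - c^{-1}d$ lies in the Clifford group whenever it is nonzero, so the multiplicative identity $|c\alpha - d| = |c|\cdot|\alpha - c^{-1}d|$ holds in $\CC_n$. This shows the inequality is equivalent to $|c\alpha - d| < 1/(2|c|)$, the form of Dirichlet's inequality used in the arithmetic argument below.

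To produce the sequence of Ford spheres, I would adapt the inductive walk in the tangency graph used in the proof of Theorem~\ref{T:connectivity}, but applied in the opposite direction: starting from $S_\infty$ (whose shadow is all of $V_n$ and so trivially contains $\alpha$) and walking toward smaller spheres whose shadow still contains $\alpha$. At stage $k$ we have $g_k \in \SL_2(O)$ with $g_k(S_\infty) = S_{c_k^{-1}d_k}$; pulling back by $g_k^{-1}$ sends $\alpha$ to a point $\alpha_k := g_k^{-1}(\alpha) \in V_n \setminus \Vec(K)$, and the Clifford-Euclidean hypothesis produces some $\omega_k \in \Vec(O)$ with $|\alpha_k - \omega_k| < 1$. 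Setting $g_{k+1} := g_k \circ T_{\omega_k} \circ J$, where $T_{\omega_k}(x) = x + \omega_k$ and $J = \left(\begin{smallmatrix} 0 & -1 \\ 1 & 0\end{smallmatrix}\right)$, yields a new unimodular pair $(c_{k+1}, d_{k+1})$; the same calculation used in the proof of Theorem~\ref{T:connectivity} gives $|c_{k+1}| > |c_k|$, so the $|c_k|$ are strictly increasing and the pairs are distinct. Since $\alpha \notin \Vec(K)$ the procedure never terminates.

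The main obstacle, just as in Ford's classical argument, is to guarantee that $\alpha$ actually lies inside the shadow of the new smaller Ford sphere at each step, i.e., that $|\alpha - c_{k+1}^{-1}d_{k+1}| < 1/(2|c_{k+1}|^2)$ is preserved under the inversion step. Classically, this is handled by the interlacing of consecutive continued-fraction convergents: among any two such convergents, at least one satisfies the $1/(2q^2)$ bound. In the Clifford setting, the analogous statement should follow from the strict inequality in the Clifford-Euclidean division algorithm combined with the identity $|c\alpha - d| = |c|\cdot |\alpha - c^{-1}d|$ on $\CC_n^\times$; if the inequality fails at a given stage, the triangle inequality applied to $\alpha - c_k^{-1}d_k$ and $\alpha - c_{k+1}^{-1}d_{k+1}$ together with the tangency of $S_{c_k^{-1}d_k}$ and $S_{c_{k+1}^{-1}d_{k+1}}$ forces the bound to hold at the previous stage, so that infinitely many of the $(c_k, d_k)$ satisfy the desired inequality.
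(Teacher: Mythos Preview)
Your opening reduction—$|\alpha - c^{-1}d| < 1/(2|c|^{2})$ if and only if the vertical ray $\ell_{\alpha}$ pierces $S_{c^{-1}d}$—is exactly the paper's. From there the paper proceeds non-constructively and in one line: Clifford-Euclideanity makes $\Vec(K)$ dense in $V_n$, so the ray $\ell_{\alpha}$ (whose boundary endpoint $\alpha$ is not a Ford tangency point) must meet infinitely many members of $\mathcal{P}$, and each such meeting supplies a pair $(c,d)$. No continued-fraction construction is carried out and no ``one of two consecutive convergents'' step appears.

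Your constructive route is a genuine alternative, and through the production of the sequence $(c_k,d_k)$ with $|c_k|$ strictly increasing it is sound. The gap is in the final paragraph. The classical ``at least one of two consecutive convergents satisfies $1/(2q^{2})$'' argument rests on the fact that in one dimension $\alpha$ lies \emph{between} $P_k$ and $P_{k+1}$, so that
\[
|P_k - P_{k+1}| \;=\; |\alpha - P_k| + |\alpha - P_{k+1}|
\]
with equality; combined with tangency $|P_k - P_{k+1}| = r_k + r_{k+1}$ this forces one summand to be strictly below $r_j$ unless $\alpha$ is the tangency point itself. In $V_n$ for $n\ge 2$ there is no betweenness: you only have the triangle inequality $|P_k - P_{k+1}| \le |\alpha - P_k| + |\alpha - P_{k+1}|$, and if both $|\alpha - P_j| \ge r_j$ this merely reproduces $r_k + r_{k+1} \le r_k + r_{k+1}$, which is no contradiction. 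So the sentence ``the triangle inequality \ldots\ together with the tangency \ldots\ forces the bound to hold at the previous stage'' does not go through as written. To salvage the constructive approach you would need a genuinely higher-dimensional input—for instance a Clifford--M\"obius distance identity yielding a quantitative bound of the shape $|\alpha - P_k| < 1/(|c_k|\,|c_{k+1}|)$, together with a separate argument controlling the growth of $|c_k|$—rather than the bare tangency-plus-triangle-inequality combination.
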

\begin{proof}
	By Clifford-Euclideanity of $O$ the rational Clifford vectors $\Vec(K)$ are dense in $\Vec(O)$. 
	For $\alpha \in V_n \setminus \Vec(K)$ the line $\lbrace \alpha+t i_n \colon t>0 \rbrace$ intersects infinitely many $S_{c^{-1}d} \in \mathcal{P}$. 
	This means that $\alpha$ is within the radius of $S_{c^{-1}d}$ from $c^{-1}d \in \Vec(K)$. 
	The formula for the radius proves the result.
\end{proof}

\subsubsection{Farey Fractions}
The analogs of Farey fractions is evident. 
Suppose that $O$ is Clifford-Euclidean with rational Clifford algebra $K$.
\begin{definition}
We will say that $x,y \in \Vec(K)$ are \emph{Farey adjacent} if and only if there exists some $g=\begin{psmallmatrix} a & b \\
c & d \end{psmallmatrix} \in \SL_2(O)$ such that $g(\infty)=x$ and $g(0)=y$.
\end{definition} 

Using out definition we can prove the following properties. 
\begin{proposition}
	\begin{enumerate}
		\item If $x$ and $y$ are Farey adjecent then $S_x$ and $S_y$ are tangent.
		\item If $z$ is the mediant of $x$ and $y$ then $S_z$ is adjacent to both $S_x$ and $S_y$.
	\end{enumerate}
\end{proposition}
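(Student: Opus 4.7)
The strategy is to reduce both parts to the base case $(S_{\infty}, S_{0})$ and then transport via the M\"{o}bius action of $g \in \SL_{2}(O)$. Two earlier facts do the work: every Ford sphere is uniquely determined by its point of tangency (so $g \cdot S_{\alpha} = S_{g(\alpha)}$ whenever $S_{\alpha} \in \mathcal{P}$ and $g \in \Gamma$), and a M\"{o}bius transformation maps horospheres to horospheres and preserves the tangency relation between them.

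For part (1), fix $g = \begin{psmallmatrix} a & b \\ c & d \end{psmallmatrix} \in \SL_{2}(O)$ realizing the Farey adjacency, so $g(\infty) = x$ and $g(0) = y$. The inversion $\begin{psmallmatrix} 0 & -1 \\ 1 & 0 \end{psmallmatrix} \in \SL_{2}(O)$ exchanges $\infty$ and $0$, so $S_{0}$ is itself a Ford sphere; by Theorem~\ref{T:integral} it is the sphere of radius $1/2$ centered at $i_{n}/2$, which meets the horizontal plane $S_{\infty} = V_{n} + i_{n}$ in the single point $i_{n}$. Hence $S_{0}$ and $S_{\infty}$ are tangent, and applying $g$ yields that $S_{x} = g(S_{\infty})$ is tangent to $S_{y} = g(S_{0})$.

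For part (2), interpret the mediant $z$ as $g(u)$ for some vector unit $u \in O^{\times} \cap \Vec(O)$ with $|u| = 1$; for $O = \ZZ$ this recovers $g(\pm 1) = (a \pm b)(c \pm d)^{-1}$. It suffices to show $S_{u}$ is tangent to both $S_{\infty}$ and $S_{0}$, since applying $g$ then produces $S_{z} = S_{g(u)}$ tangent to both $S_{x}$ and $S_{y}$. Since $\tau_{u} = \begin{psmallmatrix} 1 & u \\ 0 & 1 \end{psmallmatrix} \in \SL_{2}(O)$, the image $\tau_{u}(S_{0})$ is a Ford sphere tangent to $\partial\mathcal{H}^{n+1}$ at $u$, hence equals $S_{u}$; it has radius $1/2$ and center $u + i_{n}/2$. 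Tangency to $S_{\infty}$ is immediate as $S_{u}$ sits at height $1/2$ below the plane $V_{n} + i_{n}$; tangency to $S_{0}$ follows since the distance $|u| = 1$ between centers equals the sum $1/2 + 1/2$ of the radii.

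\textbf{Anticipated difficulty.} The main subtlety is conceptual rather than computational: in the Clifford setting the ``mediant'' of two Farey-adjacent points is not unique but is parameterized by the finite group of vector units $O^{\times} \cap \Vec(O)$, which for $n \ge 2$ generically has more elements than the two $\pm 1$ of the classical $\SL_{2}(\ZZ)$ setting. The argument above treats all these mediants uniformly, so the statement should be read as ``\emph{any} mediant $z$ of $x$ and $y$ yields an $S_{z}$ adjacent to both $S_{x}$ and $S_{y}$.''
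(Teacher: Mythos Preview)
Your proof is correct and follows essentially the same transport-from-the-base-case strategy as the paper: reduce to the tangency of $S_{\infty}$ with $S_{0}$ (and with $S_{1}$), then push forward by $g$. The paper's version is terser and takes ``the mediant'' to be specifically $z=(a+b)(c+d)^{-1}=g(1)$ rather than $g(u)$ for a general vector unit $u$; your treatment of all norm-one vector units is a harmless (and natural) generalization, and your explicit verification of the base-case tangencies via centers and radii is more detailed than the paper's bare assertion that $S_{0},S_{1},S_{\infty}$ are pairwise tangent.
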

\begin{proof}
Write $g=\begin{psmallmatrix} a & b \\
	c & d \end{psmallmatrix} \in \SL_2(O)$ with $g(\infty)=ac^{-1}=x$ and $g(0)=bd^{-1}=y$.
We know that $S_0$ and $S_{\infty}$ are tangent. 
Their images under $g$ are $S_x$ and $S_y$ and the image of adjacent spheres are adjacent. 

The mediant of $ac^{-1}$ and $bd^{-1}$ is $z=(a+b)(c+d)^{-1}$ which is $g(1)$.
We also know that $S_1$ is adjacent to both $S_0$ and $S_{\infty}$. 
Since the images of adjacent sphers are adjacent we know that $S_{z}$ will be adjacent to both $S_x$ and $S_y$. 
\end{proof}

\subsubsection{Continued Fraction}
We report that a theory of continued fractions for Clifford-Euclidean domains exists in the Clifford-Bianchi setting. 
A detailed treatment will appear in forthcoming work of Dupuy and Logan on modular symbols in the Clifford-Bianchi setting.
A \textsf{Magma} package containing this functionality is available at 
\begin{center}
    \url{https://github.com/tdupu/magma-clifford-algebras}.
\end{center}

\section{Discussion}\label{S:discussion}

\subsection{History of Clifford-algebraic $\SL_2(\CC_n)$}

An excellent history of the subject Clifford algebraic M\"{o}bius transformations up to the year 1985 is given by Ahlfors in \cite{Ahlfors1985}, which we repeat.
The group $\SL_2(\CC_n)$ was introduced by Vahlen in \cite{Vahlen1902} and then forgotten until Maass \cite{Maass1949} and then promptly forgotten again.
In \cite{Ahlfors1984}, Ahlfors does mention an unfavorable encyclopedia entry by E. Cartan and Study.
lso, \cite{Ahlfors1985} mentions that the foundational papers of Fueter in 1926 seem to be unaware of the papers of Vahlen which explains why the subject of quaternionic analysis (see \cite{Gentili2022}) seems to have developed independently. 

In the 1980s and early 1990s, building on work of Ahlfors, there are the works of Maclachlan, Weiland, and Waterman \cite{Maclachlan1989} and Waterman \cite{Waterman1993} establishing some basic properies of Clifford-Bianchi groups and the general theory of M\"{o}bius transformations. 
Perhaps most substantially during this period are the papers of Elstrodt,Grunnewald, and Mennicke \cite{Elstrodt1987, Elstrodt1988,Elstrodt1990} which develop the basic theory and relationships to spin groups, culminating in a proof of Selberg's conjecture on the smallest size of the hyperbolic Laplacian (another independent proof of this result that doesn't use Clifford algebras was given in \cite{Li1987}). 
Many important results were established during this period. 
The notation for $\SL_2(\CC_n)$ that we use is a combination of notation used by Ahlfors and Elstrodt-Grunnewald-Mennicke.

Following this period there is work of Vulakh \cite{Vulakh1993,Vulakh1995,Vulakh1999}, primarily concerned with generalizing Diophantine approximation results related to Markov and Lagrange spectra to the Clifford setting (Vulakh is an expert in approximation of complex numbers by $p/q$ with $p,q$ elements of an order in $\QQ(i\sqrt{m})$ for $m>0$ squarefree).

Finally, there is the work of Krau\ss har on extending some classical complex analytic theory of modular forms to a Clifford analytic setting culminating in a book \cite{Krausshar2004}.
Since then there have been the papers of McInroy \cite{McInroy2016}, Zemel \cite{Zemel2021b,Zemel2021} which seek to analyze Clifford-Bianchi groups for general quadratic modules.\footnote{
	We thank McInroy for pointing out the papers of Zemel to us. 
	These papers appear to have overlap with the \S2 of \cite{Dupuy2024} concerning group schemes and arithmetic Bott periodicity.}

The monograph \cite{Dupuy2024}, gives a higher dimensional version of Swan's paper \cite{Swan1971} for Bianchi groups. It gives a self-contained treatment of the basic material above. It also provides a number of new orders, explicit descriptions of fundamental domains for Clifford-Bianchi groups, as well as algorithms for orders and fundamental domains.

It is curious to note that Clifford M\"{o}bius transformations appear in 1902 \cite{Vahlen1902} and the problem of continued fractions appears as early as 1852 \cite{Hamilton1852,Hamilton1853} but that serious combining these two areas seem to only appear in 1999 \cite{Vulakh1999}.
These tools were available to Ford, whose original paper was in \cite{Ford1916}. 
We can't answer for Ford, but Elstrodt, Grunnewald, and Mennicke offer an explanation as to why after 1935 nobody looked for special isomorphisms with $\SO_{1,n}(\RR)^{\circ}$ (this comes from \S6 of \cite{Elstrodt1987}): van der Waerden (see  \cite{Hofreiter1935}) developed methods for giving isomorphism between classical Lie groups; this method was essentially exhausted by Dieudonn\'{e} in \cite{Dieudonne1952} and \cite{Dieudonne1963} and using his methods he only gave special isomorphisms involving $\O_{1,n}(\RR)$ for $3\leq n \leq 6$.
These results seem to indicate that the isomorphisms $\PSL_2(\RR) \cong \SO_{1,2}(\RR)^{\circ} \cong \Isom(\Hcal^2)^{\circ}$ and $\PSL_2(\CC) \cong \SO_{1,3}(\RR)^{\circ} \cong \Isom(\Hcal^3)^{\circ}$ are exceptional and can't be extended. 
Dieudonn\'{e}'s influence and authority likely only furthered this belief.
The isomorphisms are still believed to be accidental by many people (see e.g. \cite[Remark 5.1]{Sheydvasser2023}).

\subsection{Apollonian, Baragar, Boyd-Maxwell, and Other Packings}
 This section gives context for our construction in the existing sphere packing and arithmetic hyperbolic manifold literature. This history is quite vast (dating back to Apollonius of Perga) and this subject is rife with misattributions and rediscoveries (presumably due to the large volume of papers that it covers). The idea of generalizing the Ford circles is not new, although the connection to orders in Clifford algebras seems to be new.

\emph{Classical Apollonian sphere packings} $\mathcal{P}$ are a collection of spheres $\mathcal{P}$ in $\RR^{n}$ containing a particular $(n+2)$-cluster $\lbrace S_{0,1},\ldots,S_{0,n+1}\rbrace$ and having the property that for every $(n+1)$-cluster $\lbrace S_1,\ldots, S_{n+1} \rbrace$ contained in $\mathcal{P}$ and every sphere $S'$ such that $S'$ is tangent to $S_i$ for $i=1,\ldots,n+1$ that $S' \in \mathcal{P}$. 
We say that the base $(n+2)$-cluster \emph{generates} $\lbrace S_{0,1},\ldots,S_{0,n+1}\rbrace$ the Apollonian sphere packing. 

The classical Apollonian circle packing (also called the Apollonian gasket) containing the Ford circles is generated by the lines $y=0$,$y=1$ and the circles $C_0$ and $C_1$ of radius $1/2$ tangent to the points $(x,y)=(0,0)$ and $(x,y)=(1,1)$ on the $x$-axis.
Given the close connection between the Apollonian gasket and Ford circles, it is natural then to seek higher dimensional versions of the Apollonian gasket in the literature which could contain the Ford spheres defined in this paper.

Based on some work of Boyd \cite{Boyd1974} on higher dimensional Apollonian packings it was believed that higher dimensional spheres Apollonian packings don't exist.  
Such an assertion of non-existence of these packings can't be found directly in the paper of Boyd \cite{Boyd1974}, but appears in the final sentence of the AMS Mathematical Review for \cite{Boyd1974}, MR350626, by Wilker (see also \cite[p. 356]{Lagarias2002}).

This claim is actually false. 
There exist Apollonian packings in higher dimensions. 
In \cite{Baragar2011}, Baragar was been able to associate sphere packings to K3 surfaces $X$--- the connection between K3 surfaces and hyperbolic geometry comes through the Hodge index theorem to show that intersection form on the Picard group of $X$ has signature $(1,\rho-1)$ where $\rho$ is the Picard rank of the $X$. 
In \cite{Baragar2017} Baragar shows that the classical Apollonian circle packing corresponds to a K3 surface. 
He was able to give examples in dimensions 4,5,6 in \cite{Baragar2018}.
The packings in dimensions 7 and 8 are dealt with in \cite{Baragar2022}.
(We note that the general projective geometry setup for studying packings in this way is described very well in \cite[\S 4]{Dolgachev2016}.)

For us, these packings of Baragar seemed particularly interesting given the close connections between Clifford algebras and the Kuga-Satake construction (see \cite[\S5.7]{Dupuy2024}) and the fact that classical Ford circles are contained in the Apollonian gasket.

Interestingly, the packings associated to K3 geometry in dimensions 4,5,6 of \cite{Baragar2018} were actually a rediscovery of packings of Maxwell \cite{Maxwell1982}. We learned this from Baragar, see \cite[pg 2]{Baragar2023}.

In the literature these packings are sometimes Boyd-Maxwell packings (see \cite{Chen2015}). 
A \emph{Boyd-Maxwell} packing is a collection of spheres in hyperbolic $n$-space associated to Coxeter groups\footnote{Coxeter groups are finitely presented group generated by reflections $r_i$ with relations generated by relations of the form $(r_ir_j)^{m_{ij}}=1$.} which admit a Lorentzian realization (see also \cite[\S5]{Dolgachev2016}).
Note that these groups associated to these packings are hyperbolic reflection groups.
It is a natural question then to ask how our packings are related to Boyd-Maxwell packings.

Sheydvasser shows in \cite[\S15]{Sheydvasser2019} shows that for many orders in quaternion algebras $\PSL_2(O)$ can be embedded in hyperbolic reflection groups.
This is not a general phenomena. 
There cannot be infinitely many dimensions such that there exists an order $O \subset \CC_n$  such that $\PSL_2(O)$ is contained in an arithmetic reflection groups for $\Hcal^{n+1}$ --- this is simply because it is a theorem of Vinberg \cite{Vinberg1981} there are only finitely many arithmetic hyperbolic reflection groups (see the survey \cite{Belolipetsky2016}).

\begin{question}\label{Q:baragar}
	Which Boyd-Maxwell packings contain a Ford packing?
\end{question}

It is tempting to ask: ``If Ford packing for $O$ is contained in a Boyd-Maxwell packing then is $O$ necessarily Clifford-Euclidean?''.
Note that Sheysvasser shows in \cite[Lemma 17.4 (and the discussion that preceeds)]{Sheydvasser2019} shows that certain three dimensional packings associated to $\PSL_2(O)$ for $O$ an order in a quaternion algebra (which act on $\Hcal^4$!) other than the three Clifford-Euclidean exceptions are not Boyd-Maxwell. 
Also, there are restrictions on dimensions associated to these sorts of packings as they are only packings for Coexter diagrams of level $\leq 2$ which only exist in dimension $\leq 10$ (\cite[p. 33]{Dolgachev2016}).

In \cite{Baragar2023} Baragar gives packings which are not Boyd-Maxwell but Apollonian in a certain sense.\footnote{
	Baragar says a collection of spheres in $\Hcal^{n}$ has the \emph{weak Apollonian property} if it has a cluster of $n+2$ spheres and has the \emph{Apollonian property} if every sphere is contained in a cluster of $n+2$ spheres. 
	We call a packing \emph{Apollonian} if it is crystallographic (i.e. generated by reflections), maximal, and has the Apollonian property.
}
\begin{question}\label{WQ:E8-packing}
	It the Enriques packing of \cite{Baragar2023} the natural extension of the Ford packing for $O_{E_8}$ where $O_{E_8}$ is the order in $\CC_8$ described in \cite{Dupuy2024} with $\Vec(O_{E_8})$ an $E_8$-lattice?
\end{question}

Exploring the literature futher, we find there is a large body of work motivated by the problem of finding dense packings (this question is posed in \cite{Coxeter1954}) and the problem of finding minimal volume arithmetic hyperbolic manifolds (existence is proved in  \cite{Kazdan1968}). 
A search on the articles citing \cite{Coxeter1954} and \cite{Kazdan1968} related to hyperbolic sphere packing yields a staggering number of results.
We make no attempts at summarizing recent developments and mention that the literature in this area is vast.\footnote{For example Szirmai boasts 20 publications related to the density of hyperbolic sphere packings between 2014 and 2018.}
Both of these threads give a large number of a hyperbolic manifolds which could be related to our groups $\PSL_2(O)$.

Our groups $\PSL_2(O)$ are arithmetic and there are infinitely many of them, but perhaps some special subclass of orders have a relationship to arithmetic reflection groups. 
For example, a result of Hild \cite{Hild2007} shows that in dimension less than 10, the cusped, complete, hyperbolic orbifolds of minimal volume all have a minimal lattice packing the boundary. 
This is interesting in relation to \cite{Dupuy2024}: for certain Clifford-Bianchi orders the lattices $\Vec(O)$ appear given minimal lattice packings at the boundary.
The statement that these groups are cuspidally principal is also consistent with the observations from \cite{Dupuy2024} (see \cite[\S3.5]{Dupuy2024}).

\begin{question}
	For each group $\Gamma$ of \cite{Hild2007} does there exist some $O$ such that $\PSL_2(O)<\Gamma$? 
\end{question}

\subsection{Soddy-Gossett Formula}
Descartes formula or the Soddy-Gossett formula is a formula that relates curvatures of spheres.\footnote{The formula is neither due to Soddy nor Gossett and there is a long history of the formula which is summarized in \cite{Lagarias2002}.}
This formula is central to the representation problem of sphere packings which investigates problems related to the distribution of curvatures that appear in integral collections of tangent spheres (see for example \cite{Graham2003},\cite{Eriksson2007},\cite{Kontorovich2019b}).

The Soddy-Gossett formula in higher dimensions, states that if there are $n+2$ mutually tangent spheres in $n$-space then the curvatures (or bends) $b_i=1/r_i$ of the spheres are related by 
\begin{equation}\label{E:descartes}
 (\sum_{i=1}^{n+2} b_i)^2  = n\sum_{i=1}^{n+2}b_i^2.
\end{equation}
See \cite[Theorem 1.2]{Lagarias2002}.
We have the natural question then
\begin{question}\label{Q:descartes}
	For which packings in $\Hcal^n$ do there exist clusters of $n+2$ mutually adjacent spheres? Does a Soddy-Gossett formula hold for these Ford packings? (This could be \eqref{E:descartes} or a modification of  \eqref{E:descartes} in the style of Guettler-Mallows \cite{Guettler2010}.)
\end{question}

\subsection{Superpackings and Diophantine Approximation}
Ford spheres have a deep connection to continued fractions and Diophantine approximation.
Ford's original papers\footnote{Which were based on his PhD thesis \cite{OConnor2007}.} \cite{Ford1916} and \cite{Ford1918} were motivated by Hurwitz's theorem on optimal constants for approximation of an irrational number by a rational number. 
In \cite{Ford1925} he states that proofs of E.Borel \cite{Borel1903} and Humbert \cite{Humbert1916} are inadequate for generalization to the complex numbers and developed his technique citing work of Poincar\'{e} \cite{Poincare1900} and Bianchi \cite{Bianchi1891}. 

\begin{figure}[htbp!]
	\begin{center}
		\begin{tabular}{cc}
			\includegraphics[scale=0.5,trim = 4cm 3cm 0 0]{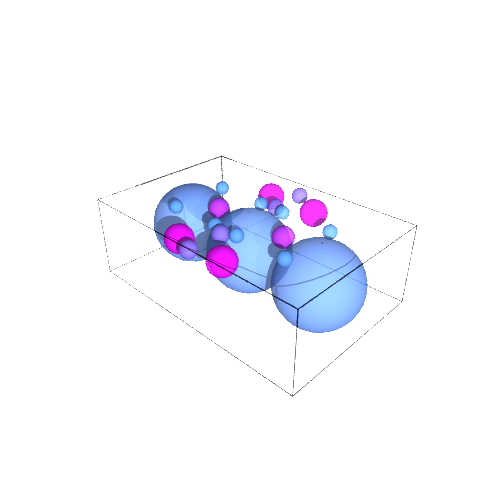} & \includegraphics[scale=0.5,trim = 0 0 0 0]{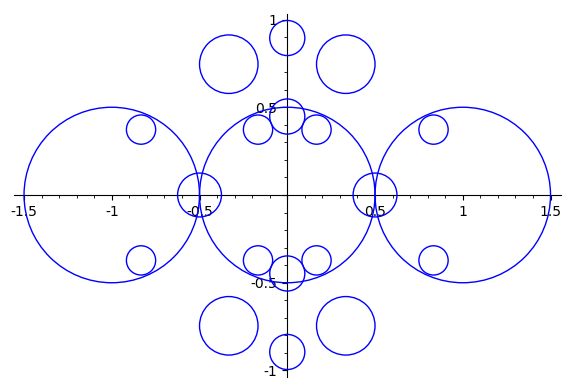} \\
		\end{tabular}
	\end{center}
	\caption{The Ford packing for $\PSL_2(\ZZ[i\sqrt{5}])$.
		Note that this maximal order has class number 2 and hence there are two connected components of this packing. Left: A picture of the Ford packing in $\Hcal^3$. Right: the projection of the equators of the Ford packing. \label{F:ford-sqrtm5} }
\end{figure} 

Schmidt \cite{Schmidt1967}, \cite{Schmidt1975} introduced the notion of Farey triangle which allowed for the analysis of optimal constants in Hurwitz' inequality. 
There has been some extensions of this work in the Clifford-Bianchi setting by Vulakh \cite{Vulakh1995},\cite{Vulakh1999} (many of the results here depended on general considerations of Riemannian geometry).
Given that we know more about the arithmetic of the orders in Clifford algebras, it may be useful to revisit \cite{Vulakh1995,Vulakh1999}.
We point to \cite{Martin2023} which provides estimated on approximation of complex numbers by elements of imaginary quadratic fields whose orders are not Euclidean which could possibly be generalized to higher dimensions.

Graham, Lagarias, Mallow, Wilks, Yan papers \cite{Graham2005},\cite{Graham2006},\cite{Graham2006b} define, classify, and study superpackings (which was then picked up on by, say, Kontorovich-Nakamura \cite{Kontorovich2019} and many other authors). Following \cite{Stange2018b} we can make the following definition for $O$ an order in a rational Clifford algebra associated to a postiive, definite, primitive, integral quadratic $(n-1)$-form $q$.

In what follows we will abusively conflate $V_m$ with $V_m \cup\lbrace \infty \rbrace$.
\begin{definition}
	A \emph{Schmidt arrangement} for $O$ in 
	$$ \lbrace (ax+b)(cx+d)^{-1} \colon x\in V_{n-1} \cup \lbrace \infty \rbrace, \quad  \begin{pmatrix} a & b \\ c & d \end{pmatrix} \in \SL_2(O) \rbrace. $$
\end{definition}
In the case of $\Hcal^3$ it was proved that these are a superpacking in the sense of \cite{Graham2006} and that, for example, the arrangement is connected if and only if $\Ocal_{K}$ is a Euclidean domain \cite[Theorem 1.5]{Stange2018b}. 
For quaternion orders these were investigated by Sheydvasser in \cite{Sheydvasser2019}. 

It is natural to ask then:
\begin{question}
	In Clifford-Euclideanity characterized by connectivity of the Schmidt arrangement or connectivity of the Ford packing?
\end{question}
For example see Figure~\ref{F:ford-sqrtm5} which shows that $\Ocal_{\QQ(\sqrt{-5})}$ is not connected. 
We note that \cite{Sheydvasser2019} has investigated this problem in the case of Clifford-Euclidean and certain non Clifford-Euclidean orders in quaternion algebras.

\bibliographystyle{alpha}
\bibliography{quat}

\end{document}